\newtheorem{theorem}{Theorem}[section]
\newtheorem{lemma}[theorem]{Lemma}
\newtheorem{proposition}[theorem]{Proposition}
\newtheorem{corollary}[theorem]{Corollary}
\theoremstyle{definition}
\newtheorem{definition}[theorem]{Definition}
\newtheorem{example}[theorem]{Example}
\theoremstyle{remark}
\newtheorem{remark}[theorem]{Remark}
\newcommand{\Z}{\mathbb{Z}}
\newcommand{\N}{\mathbb{N}}
\newcommand{\M}{{\rm M}}
\newcommand{\eg}{\overset{(*)}{=}}
\newcommand{\mat}[1]{\left[\begin{matrix} #1 \end{matrix}\right]}
\begin{document}

\title{Leavitt path algebras of Cayley graphs $C_n^j$}

\author{Gene Abrams}
\address{Department of Mathematics \\
University of Colorado \\
Colorado Springs, CO  80918, USA}
\email{abrams@math.uccs.edu}

\author{Stefan Erickson}
\address{Department of Mathematics \\
The Colorado College \\
Colorado Springs, CO  80903, USA}
\email{Stefan.Erickson@ColoradoCollege.edu }

\author{Crist\'{o}bal Gil Canto}
\address{Departamento de \'{A}lgebra, Geometr\'{\i}a y Topolog\'{\i}a\\
Universidad de M\'{a}laga\\
29071, M\'{a}laga, Spain}
\email{cristogilcanto@gmail.com / cgilc@uma.es}

\subjclass[2010]{Primary 16S99 ; Secondary 11B39, 05C99.}

\keywords{Leavitt path algebra, Cayley graph, Narayana's Cows sequence.}

\begin{abstract} Let $n$ be a positive integer.  For each $0\leq j \leq n-1$ we let $C_n^j$ denote the Cayley graph of the cyclic group $\mathbb{Z}_n$ with respect to the subset $\{1,j\}$.
Utilizing the Smith Normal Form process, we give an explicit description of the Grothendieck group of each of the Leavitt path algebras $L_K(C_n^j)$ for any field $K$.  Our general method significantly streamlines the approach that was used in previous work to establish this description in the specific case  $j=2$.  Along the way, we give necessary and sufficient conditions on the pairs $(j,n)$ which yield that this group is infinite.  We subsequently  focus on the case $j = 3$, where the structure of this group
turns out to be related to a Fibonacci-like sequence, called the Narayana's Cows sequence.
\end{abstract}

\maketitle

For any finite group $G$, and any set of generators $S$ of $G$, the {\it Cayley graph of } $G$ {\it with respect to} $S$ is the directed graph $E_{G,S}$ having vertex set $\{v_g \ | \ g\in G\}$, and in which there is an edge from $v_g$ to $v_h$ in case there exists (a necessarily unique) $s\in S$ with $h = gs$ in $G$.  For a positive integer  $n$  ($n\geq 3$)  and $j$ an integer with  $0 \leq j \leq n-1$, we denote by $C_n^j$  the Cayley graph $E_{G,S}$ of the cyclic group $G = \Z_n$ with respect to the generating subset $S = \{1,j\}$ of $\Z_n$. Concretely, the {\it Cayley graph} $C_n^j$ is the directed graph consisting of $n$ vertices $\{v_1, v_2, \hdots, v_n\}$  and $2n$ edges $\{e_1, e_2, \hdots, e_n, f_1, f_2, \hdots, f_n\}$ for which $s(e_i) = v_i,  \  r(e_i) = v_{i+1},  \  s(f_i) = v_i,   $ and $ r(f_i) = v_{i+j},$
where indices are interpreted ${\rm mod } \ n$. Intuitively, $C_n^j$ is the graph with $n$ vertices and $2n$ edges, where each vertex $v$ emits two edges, one to its ``next" neighboring vertex (which we will draw throughout this article in a clockwise direction), and one to the vertex $j$ places clockwise from $v$. For instance:

$$ C_4^0 = \ \ \ \ {
\def\labelstyle{\displaystyle}
\xymatrix{ \bullet^{v_1} \ar[r]  \ar@(l,u) &  \bullet^{v_2} \ar@(u,r) \ar[d] \\  \bullet^{v_4} \ar@(d,l) \ar[u] & \bullet^{v_3} \ar[l] \ar@(r,d)}} \hskip2cm\
C_4^2 = \ \  {
\def\labelstyle{\displaystyle}
\xymatrix{ \bullet^{v_1} \ar@/^{5pt}/ [dr] \ar[r]   &  \bullet^{v_2} \ar@/^{5pt}/ [dl] \ar[d] \\  \bullet^{v_4}  \ar@/^{5pt}/ [ur] \ar[u] & \bullet^{v_3} \ar@/^{5pt}/ [ul] \ar[l]}}
\hskip2cm  \
C_4^3 =  \ \ {
\def\labelstyle{\displaystyle}
\xymatrix{ \bullet^{v_1} \ar@/^{5pt}/ [r] \ar@/^{5pt}/ [d]   &  \bullet^{v_2} \ar@/^{5pt}/ [d] \ar@/^{5pt}/ [l] \\  \bullet^{v_4}  \ar@/^{5pt}/ [u] \ar@/^{5pt}/ [r] & \bullet^{v_3} \ar@/^{5pt}/ [l] \ar@/^{5pt}/ [u]}}
$$
\vskip0.5cm

Leavitt path algebras of Cayley graphs were initially studied in \cite{ASch}, where a description is given of the Leavitt path algebras of the form  $L_K(C_n^{n-1})$. It is shown in \cite[Theorem 8]{ASch} that there are exactly four isomorphism classes represented by the collection $\{ L_K(C_n^{n-1}) \ | \ n\in \mathbb{N}\}$. In subsequent work, \cite[Section 2]{AA} contains the computation  of the two important (and closely related) integers $|K_0(L_K(C_n^j))|$ and ${\rm det}(I_n - A_{C_n^j}^t)$, where $A_{ ( -  ) }$ denotes the incidence matrix of a directed graph,   $A_{ ( -  ) }^t$ its transpose, and $K_0 ( - )$ denotes the Grothendieck group of a ring (see below for further details).     Specifically, it is shown that, for fixed $j$, the integers $|K_0(L_K(C_n^j))|$ are (up to sign) the entries in the ``$j^{{\rm th}}$  Haselgrove sequence" (\cite[Definition 2.2]{AA}), a sequence investigated in the 1940s by Haselgrove in \cite{H} (in part with an eye towards establishing Fermat's Last Theorem).

Also in \cite{AA}, the following collections of $K$-algebras are completely described up to isomorphism:

\medskip

\quad  -  $\{ L_K(C_n^0) \ | \ n\in \mathbb{N}\}$

\qquad \ \ \  there is only one such algebra,

 \smallskip

\quad  -  $\{ L_K(C_n^1) \ | \ n\in \mathbb{N}\}$

\qquad  \ \ \ there are infinitely many such algebras, each of them subsequently realized in terms of matrices over the standard Leavitt algebras $L_K(1,m)$, and

\smallskip


\quad  -  $\{ L_K(C_n^2) \ | \ n\in \mathbb{N}\}$

\qquad \ \ \ there are infinitely many such algebras, each of them subsequently realized up to isomorphism as the Leavitt path algebra of a graph having at most three vertices; moreover, the groups $K_0(L_K(C_n^2))$ are described explicitly in terms of the integers appearing in the second Haselgrove sequence $H_2(n)$, together with integers arising from the standard Fibonacci sequence.

 \medskip

 The descriptions of all these algebras follow from an application of the powerful tool known as  the Restricted Algebraic Kirchberg-Phillips Theorem (see  Section \ref{Sect:K_0lpasC_n^3}).

In the current article we continue the investigation of Leavitt path algebras associated to Cayley graphs.  
 We begin in Section \ref{Background} by giving  the requisite background information about the topic. In Section \ref{Section:SmithnfC_n^3} we present a method to compute  the Grothendieck group of the Leavitt path algebra $L_K(C_n^j)$ for any $n\geq 3$ and $0\leq j\leq n-1$.  The  approach we utilize is completely   different from the work done in \cite{AA} for the case $C_n^2$.  Specifically, in Theorem \ref{Thm:SNF}, we show how  to reduce the computation of the Smith Normal Form of the $n \times n$ matrix $I_n - A_{C_n^j}^t$ (data which provides a complete description of the $K_0$ group) to that  of calculating the Smith Normal Form of the $j \times j$ matrix $(M_j^n)^t - I_j$, where $M_j$ is the companion matrix  of a polynomial associated to a recursive sequence which arises naturally in this context.   With this result in hand, we then use the Determinant Divisors Theorem to achieve the desired description of the finitely generated abelian group $K_0(L_K(C_n^j))$.

With the  general result Theorem \ref{Thm:SNF} in hand,  in Section \ref{Sectionjequals2and3}  we focus our attention on the specific case $j=3$.
This leads us to a number-theoretic analysis which involves some perhaps surprising and apparently nontrivial connections between the third Haselgrove sequence $H_3$ and the Narayana's Cows sequence $G$ (Proposition \ref{Cor:alpha_3=H_3}). Subsequently, we briefly  mention how the result for $j=2$ presented in
\cite{AA} (which was achieved by using an extremely ad hoc proof that spans more than a dozen journal pages) may be re-established with only a modicum of work.

In the final section we show how Theorem \ref{Thm:SNF}, together with the Restricted Algebraic Kirchberg-Phillips Theorem,  allows us to realize any  Leavitt path algebra of the form  $L_K(C_n^j)$ (specifically, the Leavitt path algebra of a graph with $n$ vertices) up to isomorphism as the Leavitt path algebra of a graph with at most $j+1$ vertices.

\section{Background information}\label{Background}

For any field $K$ and directed graph $E$ the Leavitt path algebra $L_K(E)$ has been the focus of sustained investigation since 2004.  We put off until Section \ref{Sect:K_0lpasC_n^3}  a detailed description of these algebras, choosing  instead to  focus first on the construction of a monoid which can be carried out for any directed graph.    
For a directed graph $E$ having $n$ vertices $v_1, v_2,Ê\dots, v_n$  we denote by $A_E$ the usual {\it incidence matrix} of $E$, namely, the matrix $A_E = (a_{i,j})$ where, for each pair $1\leq i,j \leq n$, the entry $a_{i,j}$ denotes the number of edges $e$ in $E$ for which $s(e)=v_i$ and $r(e) = v_j$. Let $E$ be a directed graph  with vertices $v_1, v_2, \hdots, v_n$ and incidence matrix $A_E = (a_{i,j})$.  We let $F_n$ denote the free abelian monoid on the generators $v_1, v_2, \hdots, v_n$ (so $F_n \cong \bigoplus_{i=1}^n \Z^+$ as monoids).  We denote the identity element of this monoid by $z$.    We define a relation $\approx$ on $F_n$ by setting
$$v_i  \approx  \sum_{j=1}^n a_{i,j}v_j$$
for each non-sink $v_i$, and  denote by $\sim_E$ the equivalence relation on $F_n$ generated by $\approx$.

For two $\sim_E$ equivalence classes $[x]$ and $[y]$ we define $[x] + [y] = [x+y]$; it is straightforward to show that this gives a well-defined associative binary operation on the set of $\sim_E$ equivalence classes, and that $[z]$ acts as an identity element for  this operation.  We denote the resulting
 {\it graph monoid}  $F_n / \sim_E$  by $M_E.$    Specifially,

\medskip
{\bf Definition.}
For any $n\geq 1$ and $0\leq j \leq n-1$, the graph monoid  $M_{C_n^j}$ is the free abelian monoid $F_n$ generated by $[v_1], [v_2], \hdots, [v_n]$, subject to the relations $$[v_i ]= [v_{i+1}] + [v_{i+j}]$$
(for all $1\leq i \leq n$), where subscripts are interpreted ${\rm mod} \ n$.

\medskip

For examples of the graph monoid $M_{C_n^j}$, see both  \cite[p. 3]{ASch} (in which the graph monoid  $M_{C_3^2}$ is shown to consist of the five elements $\{[z], \ [v_1], \  [v_2], \ [v_3], \ [v_1]+[v_2]+[v_3] \}$) and  \cite[pp. 3, 4]{AA} (where  the graph monoid $M_{C_4^2}$ associated to the graph $C_4^2$ is explicitly described).

We present now a streamlined version of the germane background information which will be utilized throughout the remainder of the article.  Much of this discussion appears in \cite{AA}.

For a unital $K$-algebra $A$,  the set of isomorphism classes of finitely generated projective left $A$-modules is denoted by $\mathcal{V}(A)$.  We denote the elements of  $\mathcal{V}(A)$ using brackets; for example, $[A] \in \mathcal{V}(A)$ represents the isomorphism class of the left regular module ${}_AA$.   $\mathcal{V}(A)$ is a monoid, with operation $\oplus$, and zero element $[\{0\}]$.   The monoid $(\mathcal{V}(A), \oplus)$ is {\it conical}: this means that the sum of any two nonzero elements of $\mathcal{V}(A)$ is nonzero, or, rephrased, that $\mathcal{V}(A)^* = \mathcal{V}(A) \setminus \{0\}$ is a semigroup under $\oplus$. The following striking property of Leavitt path algebras  was established in \cite[Theorem 3.5]{AMP}.
$$  \mathcal{V}(L_K(E)) \cong M_E \ \mbox{as monoids}. \ \ \ \ \
\mbox{Moreover, } \   [L_K(E)] \leftrightarrow \sum_{v\in E^0} [v]  \ \mbox{under this isomorphism.}$$

A unital $K$-algebra $A$ is called {\it purely infinite simple} in case $A$ is not a division ring, and $A$ has the property that for every nonzero element $x$ of  $A$ there exist $b,c\in A$ for which $bxc=1_A$.     It is shown in \cite[Corollary 2.2]{AGP} that if $A$ is  a unital purely infinite simple $K$-algebra, then the semigroup $(\mathcal{V}(A)^*, \oplus)$ is in fact a group, and, moreover, that $\mathcal{V}(A)^* \cong K_0(A)$, the Grothendieck group of $A$.     
Summarizing: when $L_K(E)$ is unital purely infinite simple we have the following isomorphisms of groups:
$$ K_0(L_K(E)) \cong \mathcal{V}(L_K(E))^* \cong M_E^*.$$

In particular, in this situation we have   $|K_0(L_K(E))| = | M_E^* |$.
The finite graphs $E$ for which the Leavitt path algebra $L_K(E)$ is purely infinite simple have been explicitly described (using only graph-theoretic properties of $E$)  in \cite{AAP2}.
 This result, together with the preceding discussion, immediately yields

\medskip
{\bf Proposition.} \cite[Proposition 1.3]{AA} For each $n\geq 1$, and for each $0\leq j \leq n-1$,  the $K$-algebra $L_K(C_n^j)$ is unital purely infinite simple.  In particular, $M_{C_n^j}^* = (M_{C_n^j} \setminus \{[z]\},+)$ is a group, necessarily isomorphic to $K_0(L_K(C_n^j))$.

\medskip
{\it Our goal here}  is to describe the group  $M_{C_n^j}^*$.   Our motivation for doing so is twofold.  First, the description turns out to be inherently interesting in its own right (involving the aforementioned Haselgrove sequences).  Second, we may utilize the  structure of this group, viewed as $ K_0(L_K(C_n^j))$,  to (quite surprisingly) glean information about the algebra $L_K(C_n^j)$ itself.   This is done by invoking the so-called Restricted Algebraic Kirchberg-Phillips Theorem, which we describe fully  in Section \ref{Sect:K_0lpasC_n^3}.

We now review  a useful  computational tool. Let $M \in \M_n(\mathbb{Z})$ and view $M$ as a linear transformation $M:\mathbb{Z}^n \rightarrow \mathbb{Z}^n$ via left multiplication on columns.    The cokernel of $M$
is a finitely generated abelian group, having at most $n$ cyclic direct summands; as such, by the invariant factors version of the Fundamental Theorem of Finitely Generated Abelian groups, we have
$${\rm Coker}(M)\cong \mathbb{Z}_{s_\ell} \oplus \mathbb{Z}_{s_{\ell + 1}} \oplus \cdots \oplus \mathbb{Z}_{s_n}$$
for some $1\leq \ell \leq n$, where either $n=\ell$ and $s_n=1$ (i.e., ${\rm Coker}(M)$ is the trivial group), or there are (necessarily unique) nonnegative  integers $s_\ell, s_{\ell + 1}, \dots , s_n$, for which the nonzero values $s_\ell, s_{\ell + 1}, \dots , s_r$ satisfy $s_i \geq 2$ and $s_i \vert s_{i+1}$ for $ \ell \leq i \leq r-1$, and $s_{r+1} = \cdots = s_n = 0$.
 ${\rm Coker}(M)$ is a finite group if and only if  $r=n$ (i.e., there are no zeros in the sequence $s_\ell, \dots, s_n$).
In case $\ell > 1$, we define $s_1 = s_2 = \cdots = s_{\ell-1} = 1$.    Clearly then we have
$${\rm Coker}(M)\cong \mathbb{Z}_{s_1} \oplus \mathbb{Z}_{s_{2}} \oplus \cdots \oplus \mathbb{Z}_{s_{\ell}} \oplus  \cdots  \oplus \mathbb{Z}_{s_n},$$
since any additional direct summands are isomorphic to the trivial group $\Z_1$.

\begin{remark}\label{isomorphicCokerremark}
{\rm
It is straightforward to establish that  if $P,Q$ are invertible in $\M_n(\mathbb{Z})$, then ${\rm Coker}(M)\cong {\rm Coker}(PMQ)$.  (We note that any such $P$ and $Q$ must have determinant $\pm 1$.)  This means that if $N \in \M_n(\mathbb{Z})$ is a matrix which is constructed by performing any sequence of $\mathbb{Z}$-elementary row and/or column operations starting with $M$, then ${\rm Coker}(M)\cong {\rm Coker}(N)$ as abelian groups.   }
\end{remark}

\begin{definition}\label{Def:Smithnormalform}{\rm
Let $M \in \M_n(\mathbb{Z})$, and suppose ${\rm Coker}(M)\cong \mathbb{Z}_{s_1}\oplus\mathbb{Z}_{s_2}\oplus \cdots \oplus \mathbb{Z}_{s_n}$ as described above.  The \emph{Smith Normal Form} of $M$  is the $n\times n$ diagonal matrix $ {\rm diag}(s_1, s_2, \ldots, s_r, 0, \dots, 0)$.
}
\end{definition}

For any matrix $M \in \M_n(\mathbb{Z})$, the Smith Normal Form of $M$ exists and is unique. If $D \in \M_n(\mathbb{Z})$ is a diagonal matrix with entries $d_1,d_2,\ldots,d_n$ then
clearly ${\rm Coker}(D)\cong \mathbb{Z}_{d_1}\oplus\mathbb{Z}_{d_2}\oplus \cdots \oplus \mathbb{Z}_{d_n}$.
In the end we have the following.

\begin{proposition}\label{Prop:Smithnormalform} Let $M \in \M_n(\mathbb{Z})$, and let $S$ denote the Smith Normal Form of $M$. Suppose the diagonal entries of $S$ are $s_1,s_2,\ldots,s_n$. Then
$${\rm Coker}(M)\cong \mathbb{Z}_{s_1}\oplus\mathbb{Z}_{s_2}\oplus \cdots \oplus \mathbb{Z}_{s_n}.$$
In particular, if there are no zero entries in the Smith Normal Form of $M$, then $|{\rm Coker}(M)| = s_1 s_2 \cdots s_n = |{\rm det}(S)| = |{\rm det}(M)|$.
\end{proposition}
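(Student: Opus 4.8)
The plan is to reduce everything to the two facts already assembled above: that the cokernel is invariant under left and right multiplication by integer matrices of determinant $\pm 1$ (Remark \ref{isomorphicCokerremark}), and that the cokernel of a diagonal matrix splits as the direct sum of the cyclic groups determined by its diagonal entries. The only genuine input beyond these is the classical existence theorem for the Smith Normal Form, which guarantees that $S$ is actually reachable from $M$ by a finite sequence of $\mathbb{Z}$-elementary row and column operations; equivalently, that there exist invertible $P,Q \in \M_n(\mathbb{Z})$ with $S = PMQ$.

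For the first assertion, I would begin by writing $S = PMQ$ for such $P$ and $Q$, invoking the existence of the Smith Normal Form. Applying Remark \ref{isomorphicCokerremark} then yields ${\rm Coker}(M) \cong {\rm Coker}(S)$. Since $S = {\rm diag}(s_1, \ldots, s_n)$ is diagonal, the observation recorded just before the statement gives ${\rm Coker}(S) \cong \mathbb{Z}_{s_1} \oplus \cdots \oplus \mathbb{Z}_{s_n}$, where we adopt throughout the conventions $\mathbb{Z}_0 = \mathbb{Z}$ and $\mathbb{Z}_1 = \{0\}$, so that both the padded entries $s_i = 1$ and the trailing entries $s_i = 0$ are accounted for uniformly. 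Composing the two isomorphisms gives the desired decomposition of ${\rm Coker}(M)$. I note that, in light of Definition \ref{Def:Smithnormalform}, this step is in essence a bookkeeping identity: the nonzero $s_i$ are by construction exactly the invariant factors of ${\rm Coker}(M)$.

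For the ``in particular'' clause, suppose no diagonal entry of $S$ vanishes, so that $s_i \geq 1$ for every $i$. Then each summand $\mathbb{Z}_{s_i}$ is finite of order $s_i$, whence ${\rm Coker}(M)$ is finite of order $s_1 s_2 \cdots s_n$. Because $S$ is diagonal, this product equals $|{\rm det}(S)|$. Finally, from $S = PMQ$ together with the observation in Remark \ref{isomorphicCokerremark} that ${\rm det}(P), {\rm det}(Q) \in \{\pm 1\}$, I would conclude ${\rm det}(S) = {\rm det}(P)\,{\rm det}(M)\,{\rm det}(Q) = \pm\, {\rm det}(M)$, so that $|{\rm det}(S)| = |{\rm det}(M)|$, completing the chain of equalities.

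There is no substantive obstacle here; the one point requiring care is the interface between the two descriptions of the Smith Normal Form in play---the intrinsic one via invariant factors in Definition \ref{Def:Smithnormalform}, and the operational one via row and column reduction needed to produce the matrices $P$ and $Q$. Uniqueness of the Smith Normal Form is what guarantees these coincide, and it is precisely the relation $S = PMQ$ that lets Remark \ref{isomorphicCokerremark} do the work and that simultaneously delivers the determinant identity.
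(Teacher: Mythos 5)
Your proposal is correct and follows exactly the route the paper intends: the paper offers no explicit proof, but assembles precisely the ingredients you use (the invariance of the cokernel under invertible $P,Q$ from Remark \ref{isomorphicCokerremark}, the existence and uniqueness of the Smith Normal Form, and the cokernel computation for diagonal matrices), and your handling of the determinant identity via $S=PMQ$ with $\det(P),\det(Q)=\pm 1$ matches the paper's parenthetical justification. Your remark on reconciling the intrinsic (invariant-factor) and operational (row/column reduction) descriptions of the Smith Normal Form is exactly the right point of care and is consistent with the paper's Definition \ref{Def:Smithnormalform}.
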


The key computational device we will utilize to compute the Smith Normal Form of a matrix $M$ is the following.

\medskip

{\bf Determinant Divisors Theorem.} \cite[Theorem II.9]{N} Let $M \in \M_n(\Z)$.
Define $\alpha_0 := 1$, and, for each  $1\leq i \leq n$, define the  \emph{$i^{th}$ determinant divisor of} $M$ to be the integer
$$\alpha_i := \text{ the greatest common divisor of  the set of all } i \times i \text{ minors of } M.$$
Let $s_1, s_2, ... , s_n$ denote the diagonal entries of the Smith Normal Form of $M$, and assume that each $s_i$ is nonzero.    Then   $$s_i = \frac{\alpha_{i}}{\alpha_{i-1}}$$ for each $1 \leq i \leq n$.

\medskip

Suppose now that $E$ is a finite directed graph having $n$ vertices $v_1, v_2, \dots, v_n$.
Consider the matrix $I_n - A_E^t$, where $I_n$ is the identity $n \times n$ matrix.
As above, we may view
$I_n - A_E^t$ as a linear transormation $\Z^n \rightarrow \Z^n$.  Invoking the discussion in \cite[Section 3]{AALP},   in the case $L_K(E)$ is purely infinite simple
we have that  $$K_0(L_K(E)) \cong M_E^* \cong
{\rm Coker}(I_n - A_E^t).$$
Under this isomorphism   $[v_i ] \mapsto \vec{b_i} + {\rm Im }(I_n - A_E^t)$, where  $\vec{b_i}$ is the element of $\Z^n$ which is $1$ in the $i^{th}$ coordinate and $0$ elsewhere. In other words, when $L_K(E)$ is purely infinite simple, then $K_0(L_K(E))$ is the cokernel of the linear transformation $I_n - A_E^t: \Z^n \rightarrow \Z^n$ induced by matrix multiplication.

\begin{example} Suppose $E=R_m$ ($m \geq 2$), the ``rose with $m$ petals" graph having one vertex and $m$ loops.  Because $m\geq 2$, $L_K(R_m)$ is purely infinite simple.   Here  $A_E$ is the $1 \times 1$ matrix $(m)$, so $I_1 -A_{R_m}^t$ is the $1 \times 1$ matrix $(1-m)$, and we have
$$K_0(L_K(R_m))\cong \Z^1_{1-m} \cong \Z_{m-1}.$$
\end{example}

  Proposition \ref{Prop:Smithnormalform} then yields the following.

\begin{proposition}\label{Prop:K_0andSNF} Suppose $E$ is a finite graph with $|E^0|=n$,  and suppose also that $L_K(E)$ is purely infinite simple. Let $S$ be the Smith Normal Form of the matrix $I_n-A_E^t$, with diagonal entries $s_1,s_2,\ldots,s_n$. Then
$$K_0(L_K(E))\cong \mathbb{Z}_{s_1}\oplus\mathbb{Z}_{s_2}\oplus \cdots \oplus \mathbb{Z}_{s_n}.$$
\end{proposition}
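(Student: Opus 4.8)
The plan is to assemble this proposition directly from the results that have already been established in the background section, since Proposition~\ref{Prop:K_0andSNF} is essentially a concatenation of two prior facts. First I would invoke the isomorphism coming from the discussion in \cite[Section 3]{AALP}: under the hypothesis that $L_K(E)$ is purely infinite simple, we have the chain of group isomorphisms
$$K_0(L_K(E)) \cong M_E^* \cong {\rm Coker}(I_n - A_E^t).$$
This reduces the computation of the Grothendieck group entirely to understanding the cokernel of the integer matrix $I_n - A_E^t$, viewed as a linear transformation $\Z^n \to \Z^n$.

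Second, I would apply Proposition~\ref{Prop:Smithnormalform} with the specific choice $M = I_n - A_E^t$. That proposition states that if $S$ is the Smith Normal Form of $M$ with diagonal entries $s_1, s_2, \ldots, s_n$, then
$${\rm Coker}(M) \cong \mathbb{Z}_{s_1} \oplus \mathbb{Z}_{s_2} \oplus \cdots \oplus \mathbb{Z}_{s_n}.$$
Combining this with the first displayed isomorphism chain gives precisely the desired conclusion $K_0(L_K(E)) \cong \mathbb{Z}_{s_1} \oplus \cdots \oplus \mathbb{Z}_{s_n}$. The logical skeleton of the proof is therefore just the transitivity of group isomorphisms applied to these two inputs, and the statement follows with essentially no additional work.

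Since this is a near-immediate corollary, there is no genuine obstacle to overcome; the only point requiring mild care is bookkeeping about the indexing and length of the diagonal sequence. One should confirm that the $n$ diagonal entries $s_1, \ldots, s_n$ referenced in Proposition~\ref{Prop:Smithnormalform} match the full-length sequence (padded with leading $1$'s and trailing $0$'s as in Definition~\ref{Def:Smithnormalform}), so that the direct sum decomposition genuinely has $n$ summands and the trivial summands $\mathbb{Z}_1$ and free summands $\mathbb{Z}_0 \cong \Z$ are accounted for correctly. This is exactly the convention already fixed in the background discussion preceding Definition~\ref{Def:Smithnormalform}, so no reconciliation is needed. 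I would simply state the two citations, chain the isomorphisms, and conclude.
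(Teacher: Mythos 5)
Your proposal is correct and matches the paper's own (implicit) argument exactly: the paper derives this proposition by combining the isomorphism $K_0(L_K(E))\cong M_E^*\cong \mathrm{Coker}(I_n-A_E^t)$ from the discussion of \cite[Section 3]{AALP} with Proposition~\ref{Prop:Smithnormalform}, precisely as you do. The remark about padding the invariant-factor sequence with leading $1$'s and trailing $0$'s is also consistent with the convention fixed before Definition~\ref{Def:Smithnormalform}, so nothing further is needed.
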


Moreover, if $K_0(L_K(E))$ is finite, then an analysis of the Smith Normal Form of the matrix $I_n - A_E^t$ yields
$$|  K_0(L_K(E)) | = | {\rm det}(I_n - A_E^t) |.$$
(This is immediate, since as noted above any invertible matrix in $\M_n(\Z)$ has determinant $\pm 1$.)   Conversely, $K_0(L_K(E))$ is infinite if and only if ${\rm det}(I_n - A_E^t) = 0.$

In \cite{H}, Haselgrove introduces for each pair of positive integers $n,k$ the integer
$$
a_k(n) :=   \prod_{\ell = 0}^{n-1} (1 - \omega_{\ell} - \omega_{\ell}^k),
$$
where $\omega_\ell = \cos( \frac{2 \pi \ell}{n}) + i \sin (\frac{2 \pi \ell}{n})$   in $\mathbb{C}$.  (That this expression indeed yields an integer follows from some elementary number theory.)    Subsequently, in  \cite[Definition 2.2]{AA}  the integer
$$|a_k(n)| \ \mbox{is denoted by } H_k(n),$$
and, for fixed $k$, the sequence
$$H_k(1), H_k(2), H_k(3), ...$$  is referred to as the {\it $k^{th}$ Haselgrove sequence}.
 It is of historical interest to note that Haselgrove's motivation for considering these integers $a_k(n)$   was for their potential use in establishing a connection between a resolution of Fermat's Last Theorem (at the time, of course, Fermat's Last {\it Conjecture})  and some integers which share properties of the Mersenne numbers.

We recall some properties of the integers $a_k(n)$ and $H_k(n)$ (established  \cite{AA}) which show why these are germane in the current context,  and then finish the section with a new result which will be useful in the sequel.


\medskip
{\bf Proposition.}  (See  \cite[Section 2]{AA})
Let $n\in \mathbb{N}$ and $0\leq k \leq n-1$.
\begin{enumerate}
\item   $a_k(n) = \det(I_n-A_{C_n^k}^t)$.   (This is established using  the notion of {\it circulant} matrices.)

\item  $ \det(I_n-A_{C_n^k}^t) \leq 0.$   (This is established by some elementary analysis in $\mathbb{C}$.)
\newline
 \hspace{.5in}  In particular, $\det(I_n-A_{C_n^k}^t) = -H_k(n)$.

\item     If $H_k(n)>0$, then $ |K_0(L_K(C_n^k))| = H_k(n) = | {\rm Coker}(I_n-A_{C_n^k}^t)|.$

\item      $H_k(n)=0$ if and only if  $K_0(L_K(C_n^k))$ is infinite.

\end{enumerate}



\begin{proposition}\label{Lem:H_k(n)=0}
Let $n \in \N$ and $0\leq k \leq n-1$.   Then $H_k(n) = 0$ if and only if $k \equiv 5 \pmod{6}$ and $n \equiv 0 \pmod{6}$.
\end{proposition}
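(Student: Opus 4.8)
The plan is to turn the vanishing of $H_k(n)$ into the existence of a single root of unity satisfying an explicit equation. Since $H_k(n) = |a_k(n)|$ and $a_k(n) = \prod_{\ell=0}^{n-1}(1 - \omega_\ell - \omega_\ell^k)$ is a finite product of complex numbers, $H_k(n) = 0$ if and only if one of the factors vanishes, i.e. if and only if there is an $n$-th root of unity $z = \omega_\ell$ with $z + z^k = 1$. So the whole question reduces to: for which pairs $(k,n)$ does there exist an $n$-th root of unity $z$ with $z + z^k = 1$?

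Next I would classify \emph{all} unit-modulus solutions of $z + z^k = 1$ by an algebraic manipulation that uses only $|z| = 1$. Write $a = z$ and $b = z^k$, so that $|a| = |b| = 1$ and $a + b = 1$. Conjugating this relation (the right-hand side is real) and using $\bar a = a^{-1}$, $\bar b = b^{-1}$ gives $a^{-1} + b^{-1} = 1$, hence $(a+b)/(ab) = 1$ and therefore $ab = a + b = 1$. Thus $a$ and $b$ are exactly the two roots of $t^2 - t + 1 = 0$, which is the sixth cyclotomic polynomial $\Phi_6$; its roots are the primitive sixth roots of unity $e^{\pm i\pi/3}$. Consequently $z = a \in \{e^{i\pi/3}, e^{-i\pi/3}\}$ and $z^k = b = a^{-1} = z^{-1}$.

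From this characterization both directions of the claimed equivalence follow. Because $z = e^{\pm i\pi/3}$ has order exactly $6$, for $z$ to be an $n$-th root of unity we must have $6 \mid n$, that is $n \equiv 0 \pmod 6$; and the relation $z^k = z^{-1}$ forces $z^{k+1} = 1$, which, again since $z$ has order $6$, means $6 \mid k+1$, i.e. $k \equiv 5 \pmod 6$. Conversely, if $n \equiv 0 \pmod 6$ and $k \equiv 5 \pmod 6$, then $z = e^{i\pi/3}$ is an $n$-th root of unity and $z + z^k = z + z^5 = z + \bar z = 2\cos(\pi/3) = 1$, so the corresponding factor of $a_k(n)$ vanishes and $H_k(n) = 0$.

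I expect the only genuinely delicate point to be the solution of $z + z^k = 1$ on the unit circle; the trick of conjugating to produce the second symmetric relation $ab = 1$ is what collapses the problem to $\Phi_6$, after which everything is bookkeeping with the order of a sixth root of unity modulo $n$ and modulo $k$. I would also record the harmless boundary checks: $z = 1$ is impossible (it would give $1 + 1 = 1$), and the range restriction $0 \le k \le n-1$ never interferes, since $6 \mid n$ already forces $n \ge 6$, so $k \equiv 5 \pmod 6$ admits a representative with $0 \le k \le n-1$.
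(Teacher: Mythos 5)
Your proof is correct, and it reaches the characterization of the vanishing factors by a genuinely different route than the paper. Both arguments begin identically: $H_k(n)=0$ iff some factor $1-\omega_\ell-\omega_\ell^k$ vanishes, i.e.\ iff some $n$-th root of unity $z$ satisfies $z+z^k=1$. The paper then splits this equation into real and imaginary parts and does trigonometric casework on $\sin\theta+\sin(k\theta)=0$ and $\cos\theta+\cos(k\theta)=1$ to pin down $\theta=\pi/3$ or $5\pi/3$; as written, that casework is a bit muddled (the two stated congruences $k\theta\equiv-\theta$ and $k\theta\equiv 2\pi-\theta \pmod{2\pi}$ are actually the same condition, and the intended excluded case is $k\theta\equiv\pi+\theta$). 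Your argument replaces all of this with the observation that for $a=z$, $b=z^k$ on the unit circle, conjugating $a+b=1$ gives $a^{-1}+b^{-1}=1$ and hence $ab=a+b=1$, so $a$ and $b$ are the roots of $t^2-t+1=\Phi_6(t)$, i.e.\ primitive sixth roots of unity with $b=a^{-1}$. This collapses the delicate step to pure symmetric-function algebra, avoids the trigonometric case analysis entirely, and makes the two divisibility conclusions ($6\mid n$ from the order of $z$, and $6\mid k+1$ from $z^{k+1}=1$) immediate; the converse direction is the same explicit check in both treatments. The paper's version is more elementary in the sense of using only real trigonometry, but yours is cleaner and less error-prone.
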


\begin{proof}
By the previous recollections from \cite{AA} we have
\[
H_k(n) = -\prod_{\ell = 0}^{n-1} (1 - \omega_{\ell} - \omega_{\ell}^k),
\]
where $\omega_\ell = \cos( \frac{2 \pi \ell}{n}) + i \sin (\frac{2 \pi \ell}{n})$   in $\mathbb{C}$.  Then $H_k(n) = 0$ if and only if one of the factors $1 - \omega_{\ell} - \omega_{\ell}^k = 0$ for some $0 \leq \ell \leq n-1$.
In particular, we must have $\omega_{\ell} + \omega_{\ell}^k = 1$.
Letting $\theta = \frac{2 \pi \ell}{n}$ (which we may assume $0 \leq \theta < 2 \pi$), we have
$$\cos(\theta) + \cos(k \theta) = 1 \ \ \ \  \mbox{and} \ \ \ \
\sin(\theta) + \sin(k \theta) = 0. $$
The second equation implies that $k \theta \equiv -\theta \pmod {2 \pi}$ or that $k \theta \equiv 2\pi - \theta \pmod{2 \pi}$. In the first case,
\[
\cos(\theta) + \cos(k \theta) = \cos(\theta) + \cos(-\theta) = 0,
\]
which is a contradiction. Hence, the second condition must be true, and
\[
\cos(\theta) + \cos(k \theta)
= \cos(\theta) + \cos(2\pi-\theta)
= 2 \cos(\theta) = 1.
\]
Hence, $\theta = \frac{\pi}{3}$ or $\theta = \frac{5\pi}{3}$.
Substituting in $\theta = \frac{2 \pi \ell}{n}$, we see that
\[
\frac{2 \pi \ell}{n} = \frac{\pi}{3} \implies n = 6\ell, \ \ \ \  \mbox{or} \ \  \ \
\frac{2 \pi \ell}{n} = \frac{5\pi}{3} \implies 5n = 6\ell.
\]
In either case, $n \equiv 0 \pmod{6}$.

Similarly, $k \theta \equiv 2\pi - \theta \pmod{2\pi}$ implies that
$(k+1)\theta \equiv 2 \pi \equiv 0 \pmod{2\pi}$. Hence, for some integer $m$,
\[
(k+1) \frac{\pi}{3} = 2 m \pi \implies k+1 = 6 m, \ \ \ \ \mbox{or} \ \ \ \
(k+1) \frac{5\pi}{3} = 2 m \pi \implies 5(k+1) = 6 m.
\]
In either case, $k+1 \equiv 0 \pmod{6}$, or $k \equiv 5 \pmod{6}$.

Finally, when $n \equiv 0 \pmod{6}$ and $k\equiv 5 \pmod{6}$, then letting $\ell = \frac{n}{6}$ implies that
\[
1 - \omega_{\ell} - \omega_{\ell}^k
= 1 - e^{\frac{2 \pi i}{6}} - (e^{\frac{2 \pi i}{6}})^k
= 1 - e^{\frac{2 \pi i}{6}} - (e^{\frac{2 \pi i}{6}})^{-1}
= 0
\]
Since one of the factors of $H_k(n)$ is zero, we conclude that $H_k(n) = 0$.
\end{proof}

\medskip

\section{The Smith Normal Form of the matrix $I_n - A_{C_n^j}^t$}\label{Section:SmithnfC_n^3}

In order to investigate the Leavitt path algebras $\{L_K(C_n^j) \ | \ n\in \mathbb{N}\}$, we begin in a manner similar to that used in the case $C_n^2$ studied in \cite[Section 4]{AA}. The generating relations for $M_{C_n^j}^*$ are given by
$$[v_i] = [v_{i+1}] + [v_{i+j}] $$
for $1\leq i \leq n$, where subscripts are interpreted ${\rm mod }\ n$. We will focus on the element  $[v_1]$ of $M_{C_n^j}^*$;  corresponding to any statement  established for  $[v_1]$ in  $M_{C_n^j}^*$,  there will be (by the symmetry of the relations)  an analogous statement in  $M_{C_n^j}^*$ for each $[v_i]$, $1\leq i \leq n$.

The computation of the Smith Normal Form of the $n\times n$ matrix $I_n - A_{C_n^j}^t$ is the key tool for determining the $K_0$ of the Leavitt path algebra $C_n^j$.
We show below that this  computation  reduces to calculating the Smith Normal Form of a $j \times j$ matrix.
The authors are quite grateful to  M. Iovanov for suggesting this approach.




\begin{definition}{\rm   Let $
p(x) = x^j + c_{j-1} x^{j-1} + \dots + c_1 x + c_0
$ be a degree $j$ monic polynomial with integer coefficients.  The \emph{companion matrix} $M(p)$ of $p(x)$  is the $j \times j$ matrix
\[
M(p) :=
\mat{
0      & 0      & 0      & \dots  & 0      & -c_0     \\
1      & 0      & 0      & \dots  & 0      & -c_1     \\
0      & 1      & 0      & \dots  & 0      & -c_2     \\
\vdots & \vdots & \vdots & \ddots & \vdots & \vdots   \\
0      & 0      & 0      & \dots  & 0      & -c_{j-2} \\
0      & 0      & 0      & \dots  & 1      & -c_{j-1}
}.
\]
For  $j \geq 2$  we define    $p_j(x) = x^j - x^{j-1} - 1 \in \Z[x]$. The  companion matrix of $p_j(x)$, which we will denote by $M_j$, is then the $j \times j$ matrix
\[
M_j := M(p_j(x)) =
\mat{
0      & 0      & 0      & \dots  & 0      & 1      \\
1      & 0      & 0      & \dots  & 0      & 0      \\
0      & 1      & 0      & \dots  & 0      & 0      \\
\vdots & \vdots & \vdots & \ddots & \vdots & \vdots \\
0      & 0      & 0      & \dots  & 0      & 0      \\
0      & 0      & 0      & \dots  & 1      & 1
}.
\]
}
\end{definition}


%
%


\begin{remark}   Clearly the two matrices  $I_n - A_{C_n^j}^t$ and  $A_{C_n^j} - I_n$  have the same Smith Normal Form (i.e., have isomorphic cokernels).   In the sequel we choose to  analyze the latter, because it is easier to work with computationally.   A similar statement holds for the matrices $M_n^j - I_j$ and $(M_n^j)^t - I_j$.     We note that in a more general analysis of the structure of Leavitt path algebras than the one carried out here, such an interchange might  possibly forfeit some important information.
\end{remark}

\begin{theorem}\label{Thm:SNF}
Let $n \geq j$.  Then ${\rm Coker}(A_{C_n^j} - I_n) \cong  {\rm Coker}(M_j^{n} - I_j)$.
\end{theorem}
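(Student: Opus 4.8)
The plan is to realize both cokernels as quotients of one and the same polynomial ring, exploiting the fact that $A_{C_n^j}$ is a circulant matrix. Concretely, I would identify $\Z^n$ with the ring $R:=\Z[x]/(x^n-1)$ via the $\Z$-linear map sending the $i^{\mathrm{th}}$ standard basis vector to $x^{i-1}$. Reading off the columns of $A_{C_n^j}$ from the edge data $r(e_i)=v_{i+1}$ and $r(f_i)=v_{i+j}$, the $k^{\mathrm{th}}$ column has a $1$ in rows $k-1$ and $k-j$; hence left multiplication by $A_{C_n^j}$ sends $x^{k-1}$ to $x^{k-2}+x^{k-j-1}$, i.e. it is multiplication by the element $x^{-1}+x^{-j}$ of $R$ (here $x^{-1}=x^{n-1}$). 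Consequently $A_{C_n^j}-I_n$ is multiplication by $x^{-1}+x^{-j}-1$, and therefore
\[
{\rm Coker}(A_{C_n^j}-I_n)\cong \Z[x]/\bigl(x^n-1,\ x^{-1}+x^{-j}-1\bigr).
\]

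Next I would clear the negative powers. Since $x$ is a unit in $R$, so is $-x^{j}$, and multiplying the second generator by this unit leaves the ideal unchanged, so from $-x^{j}(x^{-1}+x^{-j}-1)=x^{j}-x^{j-1}-1=p_j(x)$ I obtain ${\rm Coker}(A_{C_n^j}-I_n)\cong \Z[x]/(x^n-1,\ p_j(x))$. For the right-hand side I would invoke the standard description of a companion matrix as a cyclic module: letting $x$ act as $M_j$ makes $\Z^j$ into a $\Z[x]$-module isomorphic to $\Z[x]/(p_j(x))$, the first basis vector being a cyclic generator (one checks that multiplication by $x$ carries $x^{j-1}$ to $x^{j-1}+1$, which is exactly the last column of $M_j$). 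Under this isomorphism $M_j^{n}-I_j$ becomes multiplication by $x^n-1$, whence
\[
{\rm Coker}(M_j^{n}-I_j)\cong \bigl(\Z[x]/(p_j(x))\bigr)\big/(x^n-1)=\Z[x]/\bigl(p_j(x),\ x^n-1\bigr).
\]
The two cokernels are now visibly the same ring, which proves the theorem.

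The step I expect to require the most care is the dictionary between matrices and the ring $R$: one must respect the convention that the cokernel is computed from the action on columns (so $A_{C_n^j}$ corresponds to $x^{-1}+x^{-j}$, not $x+x^{j}$), and one must use the invertibility of $x$ modulo $x^n-1$ to pass from the Laurent-type generator $x^{-1}+x^{-j}-1$ to the genuine polynomial $p_j(x)$. Everything else is routine bookkeeping; the hypothesis $n\ge j$ only serves to ensure that the reduction actually decreases the size of the matrix.

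As a remark, the same conclusion can be reached without ring theory, in a way that stays closer to the Smith-Normal-Form and row/column reduction techniques used elsewhere in the paper. Rewriting the monoid relations as the linear recurrence $[v_{i+j}]=[v_i]-[v_{i+1}]$ expresses every generator in terms of $[v_1],\dots,[v_j]$; the periodicity $[v_{i+n}]=[v_i]$ then forces the transfer matrix $M$ of this recurrence to satisfy $(M^{n}-I_j)$ on the initial data. Since this recurrence and the one encoded by $M_j$ are reverses of one another (the substitution $x\mapsto x^{-1}$), the matrices $M^{n}-I_j$ and $M_j^{n}-I_j$ have isomorphic cokernels, recovering the statement by an explicit sequence of $\Z$-elementary operations.
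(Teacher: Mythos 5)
Your proof is correct, and it takes a genuinely different route from the paper's. The paper argues by an explicit sequence of $\Z$-elementary row and column operations: it tracks how the bottom-left $j\times j$ block of $A_{C_n^j}-I_n$ evolves through the reduction (it equals $M_j^{i+1}P$ after $i$ steps, with $P$ a permutation matrix), then finishes with auxiliary matrices $Q$ and $R$ satisfying $PR=M_j^{j-1}$ and $QR=-I_j$; moreover it only writes out the case $n>2j$, leaving $j\le n\le 2j$ to the reader. You instead identify both cokernels with the single ring $\Z[x]/(x^n-1,\,p_j(x))$: the circulant structure of $A_{C_n^j}$ makes $A_{C_n^j}-I_n$ act as multiplication by $x^{-1}+x^{-j}-1$ on $\Z[x]/(x^n-1)$ (with $x^{-1}=x^{n-1}$, and the unit $-x^{j}$ converting this generator to $p_j(x)$), while the cyclic-module description of a companion matrix makes $M_j^{n}-I_j$ act as multiplication by $x^n-1$ on $\Z[x]/(p_j(x))$. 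The identifications you rely on (the column of $A_{C_n^j}$ indexed by $k$ being supported in rows $k-1$ and $k-j$, which matches the paper's formula for $(A_{C_n^j}-I_n)_{k\ell}$; and $\Z^{j}\cong\Z[x]/(p_j(x))$ as $\Z[x]$-modules via $M_j$) all check out. Your argument buys uniformity and brevity --- no case split on the size of $n$ relative to $2j$, and a conceptual reason why the companion matrix of $p_j$ appears at all --- while the paper's approach stays entirely elementary and exhibits the equivalence through explicit invertible integer matrices in the spirit of Remark \ref{isomorphicCokerremark}. One caveat: your closing paragraph about reversing the recurrence and comparing $M^{n}-I_j$ with $M_j^{n}-I_j$ is only a sketch (the passage $x\mapsto x^{-1}$ between a companion matrix and that of the reversed polynomial would itself need justification), but nothing in your main argument depends on it.
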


\begin{proof}
The proof in the case $j \leq n \leq 2j$ is quite similar to the proof which we give here for the case $n>2j$, but requires some extra computational and notational energy;
we thereby  leave the proof of the $j \leq n \leq 2j$ case  to the interested reader. Thus we assume that $n>2j$.
By definition, the entry in the $k^{\mathrm{th}}$ row and $\ell^{\mathrm{th}}$ column of $A_{C_n^j} - I_n$ is given by
\[
( A_{C_n^j} - I_n)_{k \ell} =
\begin{cases}
-1 & \text{ if $\ell = k$ } \\
 1 & \text{ if $\ell \equiv k+1 \text{ or } k+j \pmod{n}$ } \\
 0 & \text{ otherwise. }
\end{cases}
\]
Our goal is to obtain a diagonal matrix with integers along the diagonal through elementary row and column operations involving only integer multiples.

We first note that the bottom left $j \times j$ submatrix of the matrix $A_{C_n^j} - I_n$ can be written as $M_j$ with the columns cyclically permuted:
\[
\mat{
   1   & 0 & 0 & \dots  & 0 &    0   \\
   0   & 1 & 0 &        & 0 &    0   \\
   0   & 0 & 1 &        & 0 &    0   \\
\vdots &   &   & \ddots &   & \vdots \\
   0   & 0 & 0 & \dots  & 1 &    0   \\
   1   & 0 & 0 & \dots  & 0 &    1   \\
}
=
M_j P,
\]
where $P$ is the $j \times j$ permutation matrix
\[
P =
\mat{
   0   & 1 & 0 & \dots  & 0 &    0   \\
   0   & 0 & 1 &        & 0 &    0   \\
   0   & 0 & 0 &        & 0 &    0   \\
\vdots &   &   & \ddots &   & \vdots \\
   0   & 0 & 0 & \dots  & 0 &    1   \\
   1   & 0 & 0 & \dots  & 0 &    0   \\
}
\]

The first $(n-2j)$ reduction steps of the Smith Normal Form will result in an $(n-2j) \times (n-2j)$ identity submatrix in the upper left corner.
On the bottom $j$ rows, the $i^{\mathrm{th}}$ reduction step adds the $i^{\mathrm{th}}$ column to the $(i+1)^{\mathrm{th}}$ column and the $(i+j)^{\mathrm{th}}$ column, then zeroes out the $i^{\mathrm{th}}$ column.
The matrix that accomplishes this reduction step is
\[
\mat{
\mathbf{v}_i & \mathbf{v}_{i+1} & \dots & \mathbf{v}_{i+j-1} \\
}
\cdot
\mat{
   1   & 0 & 0 & \dots   & 0 &    1   \\
   1   & 0 & 0 &         & 0 &    0   \\
   0   & 1 & 0 &         & 0 &    0   \\
\vdots &   &   & \ddots  &   & \vdots \\
   0   & 0 & 0 &         & 0 &    0   \\
   0   & 0 & 0 & \dots   & 1 &    0   \\
}
=
\mat{
\mathbf{v}_i + \mathbf{v}_{i+1} & \dots & \mathbf{v}_{i+j-1} & \mathbf{v}_{i} \\
},
\]
and this matrix is conjugate to the companion matrix $M_j$ via the permutation matrix $P$:
\[
\mat{
   1   & 0 & 0 & \dots   & 0 &    1   \\
   1   & 0 & 0 &         & 0 &    0   \\
   0   & 1 & 0 &         & 0 &    0   \\
\vdots &   &   & \ddots  &   & \vdots \\
   0   & 0 & 0 &         & 0 &    0   \\
   0   & 0 & 0 & \dots   & 1 &    0   \\
}
= P^{-1} M_j P.
\]
After $i$ reduction steps, the first $j \times j$ submatrix with nonzero column vectors on the bottom $j$ rows will be
\[
M_j P \cdot (P^{-1} M_j P)^{i} = M_j^{i+1} P.
\]


Denote by $Q$ the $j \times j$ matrix
\[
Q =
\mat{
  -1   &  1 &  0 & \dots  & 0  &   0   \\
   0   & -1 &  1 &        & 0  &   0   \\
   0   &  0 & -1 &        & 0  &   0   \\
\vdots &    &    & \ddots &    & \vdots \\
   0   &  0 &  0 &        & -1 &   1   \\
   0   &  0 &  0 & \dots  & 0  &  -1   \\
}.
\]
Then after $n-2j$ reduction steps, we have
\[
A_{C_{n}^{j}} - I_n \sim
\mat{
I_{n-2j}            & 0_{(n-2j) \times j} &   0_{(n-2j) \times j}    \\
0_{j \times (n-2j)} &          Q          &          M_j P           \\
0_{j \times (n-2j)} &  M_j^{n-2j+1} P     &            Q             \\
}.
\]

Because each reduction step only adds previous columns to the following existing columns, the next $j$ reduction steps results in
\[
A_{C_{n}^{j}} - I_n \sim
\mat{
I_{n-j}            &  0_{(n-j) \times j}        \\
0_{j \times (n-j)} &  M_j^{n-j+1} P + Q \\
}.
\]

Finally, we adjust the bottom right $j \times j$ matrix by
adding the $(n-j+1)^{\mathrm{th}}$ column through the $(n-1)^{\mathrm{th}}$ column to the $n^{\mathrm{th}}$ column,
adding the $(n-j+1)^{\mathrm{th}}$ column through the $(n-2)^{\mathrm{th}}$ column to the $(n-1)^{\mathrm{th}}$ column, and so on until
adding the $(n-j+1)^{\mathrm{th}}$ column to the $(n-j+2)^{\mathrm{th}}$ column.
This procedure is equivalent to multiplying $M_j^{n-j+1} P + Q$ on the right by the $j \times j$ matrix
\[
R =
\mat{
   1   &  1 &  1 & \dots  & 1 &    1   \\
   0   &  1 &  1 &        & 1 &    1   \\
   0   &  0 &  1 &        & 1 &    1   \\
\vdots &    &    & \ddots &   & \vdots \\
   0   &  0 &  0 &        & 1 &    1   \\
   0   &  0 &  0 & \dots  & 0 &    1   \\
}.
\]
A straightforward calculation shows that
\[
P R = M_j^{j-1} \qquad \text{and} \qquad Q R = -I_j,
\]
resulting in the final bottom right $j \times j$ submatrix is
\[
(M_j^{n-j+1} P + Q) R = M_j^n - I_j.
\]
Therefore,
\[
A_{C_{n}^{j}} - I_n \sim
\mat{
      I_{n-j}      & 0_{(n-j) \times j} \\
0_{j \times (n-j)} &    M_j^n - I_j     \\
}
\]
and thus by Remark \ref{isomorphicCokerremark}, we conclude that ${\rm Coker}(A_{C_n^j} - I_{n}) \cong  {\rm Coker}(M_j^n - I_j)$.
\end{proof}



\section{The case $j=3$, and the case $j=2$ (briefly) revisited}\label{Sectionjequals2and3}

%
%
%

We have shown in Theorem \ref{Thm:SNF}  that for any $n \geq j$, the cokernel of the $n \times n$ matrix $A_{C_n^j} - I_n$ is isomorphic to the cokernel of the $j\times j$ matrix $M_j^n - I_j$.
In this section we investigate in detail the specific situation when $j=3$.     We do so for two reasons:   this  case will provide some insight as to how the general case works, and, as it turns out, the $j=3$  case provides a sort of ``sweet spot" in the general setting.  We conclude the section by showing how Theorem \ref{Thm:SNF} dramatically simplifies the proof of the corresponding result in the $j=2$ case as compared to the proof given  in \cite{AA}.


An important role in the $j=3$ case will be played in this situation by the elements of the {\it Narayana's Cows sequence} $G$, defined recursively by setting
$$G(1) = 1, \ \  G(2) = 1, \ \  G(3) = 1, \  \mbox{ and }  \ G(n) = G(n-1) + G(n-3) \mbox{ for all $n \geq 4$}.$$
  (We may also define $G(0) = 0$, $G(-1)=0$, $G(-2)=1$ and $G(-3)=0$ consistent with the given recursion equation). This name is used in the Online Encyclopedia of Integer Sequences \cite[Sequence A000930]{OEIS}.   (Indeed, this sequence has gained some notoriety in popular culture, including the composition of a musical piece based on it.)    The first few terms of the sequence $G(n)$ ($n\geq 1$) are:
$$G: \ \ 1,1,1,2,3,4,6,9,13,19,28,41,60,88,129, \dots$$

By the aforementioned \cite[Proposition 1.3]{AA} we have that $M_{C_n^3}^*$ is indeed a group.  In  $M_{C_n^3}^*$ we have
\begin{align*}
 [v_1] & = [v_2] + [v_4] = ([v_3] + [v_5]) + [v_4] = ([v_4] + [v_6]) + [v_5] + [v_4] \\
 & = 2[v_4] + [v_5] + [v_6] = 2([v_5] + [v_7]) + [v_5] + [v_6] \\
 & = 3[v_5] + [v_6] + 2[v_7] = \cdots
\end{align*}
which by an easy induction  gives, for $1\leq i \leq n$,
$$[v_1] = G(i-1)[v_{i-1}] + G(i-3)[v_i] + G(i-2)[v_{i+1}] .$$
Thus is the Narayana's Cows sequence  related to the structure of $M_{C_n^3}^*$

%
%
%
%

Setting $i=n$, and using that $[v_{n+1}] = [v_1]$ by notational convention, we get in particular that
$[v_1] = G(n-1)[v_{n-1}] + G(n-3)[v_n] + G(n-2)[v_1]$, so that
\begin{equation*}
0 = G(n-1)[v_{n-1}] + G(n-3)[v_n] + (G(n-2)-1)[v_1]  \ \ \ \  \mbox{in} \ M_{C_n^3}^*.
\end{equation*}

\bigskip

It will be quite useful  to have an expression for the $3\times 3$ matrix $M_{3}^n$ in terms of the Narayana's Cows sequence, which is the content of the following lemma.

\begin{lemma}\label{Lem:M3n}
Let $G$ denote the Narayana's Cows sequence.  Then for each $n \in \N$,
\[
M_3^{n} =
\mat{
G(n-2) & G(n-1) & G(n)   \\
G(n-3) & G(n-2) & G(n-1) \\
G(n-1) & G(n)   & G(n+1)
} .
\]
\end{lemma}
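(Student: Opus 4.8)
The plan is to prove the identity by induction on $n$, exploiting the fact that left multiplication by the companion matrix $M_3$ implements the Narayana recursion directly on the rows. First I would record the explicit form of $M_3$ coming from the definition: since $p_3(x) = x^3 - x^2 - 1 = x^3 + c_2 x^2 + c_1 x + c_0$ has $c_2 = -1$, $c_1 = 0$, $c_0 = -1$, the last column of the companion matrix is $(-c_0, -c_1, -c_2)^t = (1,0,1)^t$, so
$$M_3 = \mat{0 & 0 & 1 \\ 1 & 0 & 0 \\ 0 & 1 & 1}.$$
For the base case $n = 1$ I would evaluate the claimed matrix using the extended values $G(0) = 0$, $G(-1) = 0$, $G(-2) = 1$ supplied in the definition of $G$; this produces exactly the matrix $M_3$ above, so the formula holds at $n = 1$.

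Before running the inductive step I would observe that the recursion $G(m) = G(m-1) + G(m-3)$ holds not merely for $m \geq 4$ but for every integer $m \geq 1$: the cases $m = 1, 2, 3$ are verified directly against the extended values (for instance $G(1) = G(0) + G(-2) = 0 + 1$, $G(2) = G(1) + G(-1) = 1 + 0$, $G(3) = G(2) + G(0) = 1 + 0$), which is precisely why those boundary values were selected. This is the one bookkeeping point that genuinely needs attention, because the lower-left entries of $M_3^n$ force arguments as small as $G(n-3)$ into the computation when $n$ is small, and the induction relies on the recursion remaining valid there.

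For the inductive step, assuming the stated form for $M_3^n$, I would compute $M_3^{n+1} = M_3 \cdot M_3^n$. Since the rows of $M_3$ are $(0,0,1)$, $(1,0,0)$, and $(0,1,1)$, the first row of the product is the third row of $M_3^n$, its second row is the first row of $M_3^n$, and its third row is the sum of the second and third rows of $M_3^n$. The first two rows thereby already display the correctly shifted entries demanded by the formula for $M_3^{n+1}$, namely $(G(n-1), G(n), G(n+1))$ and $(G(n-2), G(n-1), G(n))$. The third row equals
$$\bigl(G(n-3) + G(n-1),\ \ G(n-2) + G(n),\ \ G(n-1) + G(n+1)\bigr),$$
and three applications of the recursion collapse this to $(G(n), G(n+1), G(n+2))$, which is exactly the third row prescribed for $M_3^{n+1}$. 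This completes the induction.

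I expect no serious obstacle in this argument, as it is a direct verification once the mechanism is set up. The only delicate point, as noted, is the alignment of indices across the three rows together with the validity of the Narayana recursion at the small and negative arguments used in the base case and the initial inductive steps; everything else is a routine matrix multiplication.
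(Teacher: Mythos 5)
Your proof is correct and follows essentially the same route as the paper's: induction on $n$ with the base case checked against the extended values $G(0)=0$, $G(-1)=0$, $G(-2)=1$, and the inductive step carried out by multiplying by $M_3$ and invoking the recursion $G(m)=G(m-1)+G(m-3)$. The only cosmetic difference is that you multiply by $M_3$ on the left (acting on rows) where the paper multiplies on the right (acting on columns), and your explicit check that the recursion remains valid at the small arguments $m=1,2,3$ is a welcome point of care that the paper leaves implicit.
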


\begin{proof} The proof is by induction.  As mentioned previously, we may extend  the $G(n)$ sequence by setting $G(0)=0$, $G(-1)=0$, and $G(-2)=1$.  Thus  we have that the statement is true for $n=1$:
\[
M_3 =
\mat{
G(-1) & G(0) & G(1)   \\
G(-2) & G(-1) & G(0) \\
G(0) & G(1)   & G(2)
}.
\]
Now suppose that 
\[
M_3^{n-1} =
\mat{
G(n-3) & G(n-2) & G(n-1) \\
G(n-4) & G(n-3) & G(n-2) \\
G(n-2) & G(n-1)   & G(n)
}
\]
Then
\begin{align*}
M_3^n = M_3^{n-1} \cdot M_3 &=
\mat{
G(n-3) & G(n-2) & G(n-1) \\
G(n-4) & G(n-3) & G(n-2) \\
G(n-2) & G(n-1) & G(n)
}
\cdot
\mat{
0 & 0 & 1 \\
1 & 0 & 0 \\
0 & 1 & 1
} \\
& =
\mat{
G(n-2) & G(n-1) & G(n-3) + G(n-1) \\
G(n-3) & G(n-2) & G(n-4) + G(n-2) \\
G(n-1) & G(n)   & G(n-2) + G(n)
}
\\
& =
\mat{
G(n-2) & G(n-1) & G(n)   \\
G(n-3) & G(n-2) & G(n-1) \\
G(n-1) & G(n)   & G(n+1)
}
\end{align*}
as desired.
\end{proof}

\begin{corollary}\label{Smithnf3}
For all $n \geq 3$,
\[
{\rm Coker}(A_{C_n^3} - I_n) \cong  {\rm Coker}
\mat{
G(n-2)-1 & G(n-3) & G(n-1) \\
G(n-1) & G(n-2)-1 & G(n) \\
G(n) & G(n-1)  & G(n+1)-1
},
\]
where $G$'s are the Narayana's Cows numbers.
\end{corollary}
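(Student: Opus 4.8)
The plan is to chain together the two results already established in this section, together with a single elementary fact about transposes. First I would apply Theorem \ref{Thm:SNF} in the case $j = 3$. Since the hypothesis $n \geq 3 = j$ is exactly the standing assumption of the corollary, the theorem supplies
\[
{\rm Coker}(A_{C_n^3} - I_n) \cong {\rm Coker}(M_3^n - I_3).
\]
This already reduces the corollary to identifying ${\rm Coker}(M_3^n - I_3)$ with the cokernel of the $3 \times 3$ matrix displayed in the statement.

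Next I would substitute the closed form for $M_3^n$ furnished by Lemma \ref{Lem:M3n} and subtract $I_3$ from the diagonal, obtaining
\[
M_3^n - I_3 =
\mat{
G(n-2)-1 & G(n-1) & G(n) \\
G(n-3) & G(n-2)-1 & G(n-1) \\
G(n-1) & G(n) & G(n+1)-1
}.
\]
Comparing this entry by entry with the matrix in the corollary, one sees that the latter is precisely the transpose of $M_3^n - I_3$: the diagonal entries coincide, while the six off-diagonal Narayana values are reflected across the main diagonal. Since a matrix and its transpose always share the same Smith Normal Form, and hence have isomorphic cokernels (this is the fact recorded in the Remark preceding Theorem \ref{Thm:SNF} for the pair $M_j^n - I_j$ and $(M_j^n)^t - I_j$), we get
\[
{\rm Coker}(M_3^n - I_3) \cong {\rm Coker}\big((M_3^n - I_3)^t\big).
\]
Stringing the three isomorphisms together yields the asserted description.

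There is no serious obstacle here: the entire mathematical weight is carried by Theorem \ref{Thm:SNF} and Lemma \ref{Lem:M3n}, and the proof amounts to a substitution followed by a transpose. The only place demanding a little care is the index bookkeeping in the transpose identification, where one must use the boundary conventions $G(0) = 0$, $G(-1) = 0$, $G(-2) = 1$ consistently so that Lemma \ref{Lem:M3n}, and therefore the displayed matrix, is already valid at the smallest value $n = 3$ covered by the statement.
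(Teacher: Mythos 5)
Your proof is correct and follows essentially the same route as the paper, which likewise derives the corollary directly from Theorem \ref{Thm:SNF} and Lemma \ref{Lem:M3n}; your only addition is to make explicit the (harmless) transpose step identifying the displayed matrix as $(M_3^n)^t - I_3$, which the paper handles via the remark that a matrix and its transpose have isomorphic cokernels.
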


\begin{proof} The result follows immediately from Theorem~\ref{Thm:SNF}
and Lemma~\ref{Lem:M3n}.
\end{proof}


We now analyze the  Smith Normal Form  for the matrix $M_3^n - I_3$.

\begin{definition}\label{DefDeterminantDivisors}
Let $n \in \mathbb{N}$.   By Lemma \ref{Lem:M3n} we have
\[
(M_3^n)^t - I_3 =
\mat{
G(n-2)-1 & G(n-3) & G(n-1) \\
G(n-1) & G(n-2)-1 & G(n) \\
G(n) & G(n-1)  & G(n+1)-1
}.
\]
For $i=1,2,3$ and each $n\geq 1$,  the corresponding $i$-th determinant divisors $ \alpha_1(n),\alpha_2(n),$ and $\alpha_3(n)$  of $(M_3^n)^t - I_3$ have the following values.



\medskip

\underline{$i=1$}:    \ $\alpha_1(n)$ is the greatest common divisor of the  nine $1\times 1$ minors $(M_3^n)^t - I_3$, i.e., of the nine entries of $(M_3^n)^t - I_3$.   By eliminating repeated entries, we see that $\alpha_1(n)$ is the greatest common divisor of five integers, to wit:
$$\alpha_1(n) = \gcd \{ \ G(n-2)-1, \ G(n-3), \ G(n-1), \ G(n), \ G(n+1)-1 \ \}. $$

\medskip

\underline{$i=2$}:   \ $\alpha_2(n)$ is the greatest common divisor of the  nine $2\times 2$ minors $(M_3^n)^t - I_3$, i.e., of the  determinants of the nine $2\times 2$ submatrices  of $(M_3^n)^t - I_3$.   By doing the standard computation for determinants of $2\times 2$ matrices, and then eliminating repeated results, we see that $\alpha_2(n)$ is the greatest common divisor of six integers, to wit:

\medskip

$\alpha_2(n) =  \ \gcd \{ \ (G(n-2)-1)^2-G(n-1)G(n-3), \  \
 (G(n-2)-1)G(n-1)-G(n)G(n-3), \ $

 \vspace{.05in}

 $\hspace{1in}
  (G(n-2)-1)(G(n+1)-1)-G(n)G(n-1), \ \  $
  $
 \ (G(n-2)-1)G(n)-G(n-1)^2,\ $

  \vspace{.05in}

 $\hspace{1in}  G(n-1)(G(n+1)-1)-G(n)^2, \
 \ G(n-3)(G(n+1)-1)-G(n-1)^2  \ \} .$

\medskip

\underline{$i=3$}:    \ $\alpha_3(n)$ is the greatest common divisor of the one $3\times 3$ minor of  $(M_3^n)^t - I_3$, in other words,
$$\alpha_3(n) =  \ |  {\rm det}((M_3^n)^t - I_3) |.$$

\end{definition}

\medskip

By Proposition \ref{Lem:H_k(n)=0}, since $3 \not\equiv 5 \ {\rm mod} 6$, we have that ${\rm Coker}(I_n - A_{C_n^j}^t)$ is finite, so that by Theorem \ref{Thm:SNF} ${\rm Coker}((M_3^n)^t - I_3)$ is also finite, which yields that  necessarily none of the entries in the Smith Normal Form of $(M_3^n)^t - I_3$ is zero.
Therefore, by the Determinant Divisors Theorem, the Smith Normal Form of the matrix $(M_3^n)^t - I_3$ is given by:
\[
{\rm SNF}((M_3^n)^t - I_3) =
\mat{
\alpha_1(n) & 0 & 0 \\
0 & \frac{\alpha_2(n)}{\alpha_1(n)} & 0 \\
0 & 0  & \frac{\alpha_3(n)}{\alpha_2(n)}
},
\]

\smallskip

\noindent
where the values of $\alpha_1(n), \alpha_2(n),$ and $ \alpha_3(n)$ are as presented in Definition \ref{DefDeterminantDivisors}.


As a consequence, Corollary \ref{Smithnf3} immediately yields the following result.

\begin{corollary}\label{Cor:K_0intermsofalpha} Let $n \in \mathbb{N}$. Then
$$K_0(L_K(C_n^3)) \cong \mathbb{Z}_{\alpha_1(n)} \oplus \mathbb{Z}_{\frac{\alpha_2(n)}{\alpha_1(n)}} \oplus \mathbb{Z}_{\frac{\alpha_3(n)}{\alpha_2(n)}}.$$
\end{corollary}
%

Corollary \ref{Cor:K_0intermsofalpha} is the key ingredient we will utilize to prove the main result about the structure of the $K_0(L_K(C_n^3))$.

\begin{remark}
{\rm Our goal for the remainder of this section will be to present a more efficient  description of the determinant divisors $\alpha_1(n), \alpha_2(n)$ and $\alpha_3(n)$ than those given in Definition \ref{DefDeterminantDivisors}.
Our motivation is as follows.   In \cite{AA}, a description of $K_0(L_K(C_n^2))$ is given in terms of greatest common divisors of pairs of integers involving terms of the Fibonacci sequence.   Our aim   here is to  establish the  analogous result, by describing $K_0(L_K(C_n^3))$ in terms of greatest common divisors of triples of integers involving terms of the Narayana's Cows sequence.

 }
\end{remark}


For integers $a,b$, $\gcd \{a,b\}$ denotes the greatest common divisor of $a$ and $b$.   A key role will be played by the following integer.

\begin{definition}\label{Def:d_3}
{\rm For any positive integer $n$ we define
$$d_3(n) := \gcd   \{ \ G(n-1), \ G(n-3),  \ G(n-2)-1 \ \}.$$

\noindent
The first few terms of the sequence $d_3(n)$ ($n\geq 1$) are:
$$d_3: \ \ 1,1,1,1,1,1,2,3,1,1,1,1,1,4,1,3,1,1, \dots $$}
\end{definition}






To begin with we show that the first determinant divisor $\alpha_1(n)$ coincides with the integer  $d_3(n)$ given in Definition \ref{Def:d_3}. To achieve it recall that we  use the following well-known property of greatest common divisors:   if $b_{n+1}$ is a $\Z$-linear combination of the integers $b_1, b_2, \dots, b_n$, then
 $$   (\ast) \ \ \ \ \ \ \ \ {\rm gcd} \{ b_1, b_2, \dots, b_n, b_{n+1} \}= {\rm gcd}\{b_1, b_2, \dots, b_n\}.$$

\begin{lemma}\label{Lem:alpha_1=d_3} Let $n \in \mathbb{N}$ and $d_3(n) = {\rm gcd} \{G(n-1),G(n-3), G(n-2)-1\}$. Then $\alpha_1(n)= d_3(n)$.
\end{lemma}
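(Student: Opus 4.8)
The plan is to show the two quantities divide each other. Recall that
$$\alpha_1(n) = \gcd \{ G(n-2)-1, \ G(n-3), \ G(n-1), \ G(n), \ G(n+1)-1 \},$$
while $d_3(n) = \gcd\{G(n-1), G(n-3), G(n-2)-1\}$. Since $d_3(n)$ is the gcd of a subset of the integers defining $\alpha_1(n)$, it is immediate that $\alpha_1(n) \mid d_3(n)$. The real work is the reverse divisibility $d_3(n) \mid \alpha_1(n)$, for which it suffices to show that the two ``extra'' generators in $\alpha_1(n)$, namely $G(n)$ and $G(n+1)-1$, are $\Z$-linear combinations of the three generators of $d_3(n)$. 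By property $(\ast)$ quoted just before the lemma, adjoining such $\Z$-linear combinations to a gcd does not change its value, so establishing these two combination identities finishes the proof.

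First I would dispatch $G(n)$. Using the Narayana recursion $G(n) = G(n-1) + G(n-3)$, we have $G(n) = G(n-1) + G(n-3)$, which is visibly a $\Z$-linear combination of $G(n-1)$ and $G(n-3)$, both of which are generators of $d_3(n)$. Hence by $(\ast)$,
$$\gcd\{G(n-1),G(n-3),G(n-2)-1, G(n)\} = \gcd\{G(n-1),G(n-3),G(n-2)-1\} = d_3(n).$$

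Next I would handle $G(n+1)-1$, which is the only genuinely interesting step. Applying the recursion twice, $G(n+1) = G(n) + G(n-2)$, so
$$G(n+1) - 1 = G(n) + G(n-2) - 1 = \bigl(G(n-1)+G(n-3)\bigr) + \bigl(G(n-2)-1\bigr),$$
where in the last equality I substituted $G(n) = G(n-1)+G(n-3)$. This exhibits $G(n+1)-1$ as the $\Z$-linear combination $1\cdot G(n-1) + 1\cdot G(n-3) + 1\cdot (G(n-2)-1)$ of the three generators of $d_3(n)$. Applying $(\ast)$ once more to remove $G(n+1)-1$ from the gcd defining $\alpha_1(n)$ gives $\alpha_1(n) = d_3(n)$.

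The only potential obstacle is purely bookkeeping: making sure the recursion is applied with the correct index shifts (in particular that $G(n+1)=G(n)+G(n-2)$ follows from the base recursion and remains valid under the extended convention $G(0)=0$, $G(-1)=0$, $G(-2)=1$ for the small values of $n$ where the indices dip below $1$). Since the recursion $G(m)=G(m-1)+G(m-3)$ is stated to hold for all the relevant extended indices, no separate low-$n$ casework should be needed, and the identities above hold uniformly. Thus the proof reduces entirely to the two elementary combination identities together with the gcd-invariance property $(\ast)$.
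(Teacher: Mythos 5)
Your proof is correct and follows essentially the same route as the paper: both arguments use the recursion $G(n)=G(n-1)+G(n-3)$ (and its shift $G(n+1)-1=G(n)+(G(n-2)-1)$) to exhibit the two extra generators $G(n)$ and $G(n+1)-1$ as $\Z$-linear combinations of the three generators of $d_3(n)$, then invoke property $(\ast)$ to discard them. The only cosmetic difference is that you phrase the conclusion via mutual divisibility while the paper simply applies $(\ast)$ twice in succession.
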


\begin{proof} According to Definition \ref{DefDeterminantDivisors},
$$\alpha_1(n) := {\rm gcd}\{G(n-2)-1,G(n-3),G(n-1),G(n),G(n+1)-1\}.$$
But   $G(n+1)-1 = G(n) + (G(n-2) - 1)$, and in turn $G(n) = G(n-3) + G(n-1)$,  so applying $(\ast)$ twice in order gives
\begin{align*}
\alpha_1(n)  & =   {\rm gcd}\{ \ G(n-2)-1, \ G(n-3), \ G(n-1), \ G(n), \ G(n+1)-1\ \} \\
 & =   {\rm gcd}\{ \ G(n-2)-1, \ G(n-3), \ G(n-1), \ G(n) \ \} \\
&  =  {\rm gcd}\{ \ G(n-2)-1, \ G(n-3), \ G(n-1) \ \}  = d_3(n). \hfill \qedhere
\end{align*}
\end{proof}

We focus now our attention in analyzing the second determinant divisor $\alpha_2(n)$ given in Definition \ref{DefDeterminantDivisors}. Thanks to property $(\ast)$ we may  reduce the number of terms appearing in its expression.

\begin{definition}{\rm Let $n \in \mathbb{N}$. We  define
\begin{align*}
d'_3(n) = {\rm gcd} & \{G(n-1)G(n-3)-(G(n-2)-1)^2, \\
& \ G(n)G(n-3)-G(n-1)(G(n-2)-1), \\
& \ G(n-1)^2-G(n)(G(n-2)-1)\}.
\end{align*}}
\end{definition}

The first terms of the $d'_3(n)$ sequence ($n\geq 1$)  are:
$$d'_3: \ \ 1,1,1,1,1,1,4,9,1,1,1,1,1,16,1,9,1,1,...$$

\begin{lemma}\label{Lem:alpha_2} Consider the second determinant divisor $\alpha_2(n)$ from Definition \ref{DefDeterminantDivisors}. Then
$$\alpha_2(n) = d'_3(n).$$
\end{lemma}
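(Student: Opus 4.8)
The plan is to exploit that the three generators of $d'_3(n)$ are, up to sign, already among the six $2\times 2$ minors whose gcd defines $\alpha_2(n)$, and then to show that the remaining three minors are redundant. Write
$$a = G(n-1)G(n-3)-(G(n-2)-1)^2, \qquad b = G(n)G(n-3)-G(n-1)(G(n-2)-1), \qquad c = G(n-1)^2-G(n)(G(n-2)-1)$$
for the three integers generating $d'_3(n)$. Matching entries against the list in Definition \ref{DefDeterminantDivisors}, one checks that the first, second, and fourth minors appearing in $\alpha_2(n)$ are exactly $-a$, $-b$, and $-c$; since $\gcd$ is insensitive to signs, it therefore suffices to prove that the three ``extra'' minors, namely the ones involving the entry $G(n+1)-1$,
$$d = (G(n-2)-1)(G(n+1)-1)-G(n)G(n-1), \qquad e = G(n-1)(G(n+1)-1)-G(n)^2, \qquad f = G(n-3)(G(n+1)-1)-G(n-1)^2,$$
each lie in the subgroup $\Z a + \Z b + \Z c$ of $\Z$. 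Granting this, property $(\ast)$ (applied three times, once to remove each of $d,e,f$) yields $\alpha_2(n)=\gcd\{a,b,c,d,e,f\}=\gcd\{a,b,c\}=d'_3(n)$.

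The key computational input is the pair of Narayana identities $G(n+1)-1=G(n)+(G(n-2)-1)$ and $G(n)=G(n-1)+G(n-3)$, the first of which rewrites the offending entry $G(n+1)-1$ in terms of quantities already present in $a,b,c$. Substituting the first identity into $d,e,f$ and simplifying with the second, I expect the three clean relations
$$d = -a-c, \qquad e = -b, \qquad f = b-c$$
to emerge. For instance, after substituting $G(n+1)-1=G(n)+(G(n-2)-1)$ into $d$ and replacing $(G(n-2)-1)^2$ and $G(n)(G(n-2)-1)$ by $G(n-1)G(n-3)-a$ and $G(n-1)^2-c$ respectively, the coefficient of $G(n-1)$ collapses to $G(n-1)-G(n)+G(n-3)=0$, leaving precisely $-a-c$. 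The identities for $e$ and $f$ follow by the same mechanism, the crucial vanishing in every case being $G(n)=G(n-1)+G(n-3)$.

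There is no genuine structural obstacle here: once the substitution $G(n+1)-1=G(n)+(G(n-2)-1)$ is made, each minor becomes a sum of products of two Narayana-type quantities, and the reductions to $a,b,c$ are forced by the recursion. The only real care needed is in the signs and bookkeeping when verifying $d=-a-c$, $e=-b$, and $f=b-c$, which is where essentially all of the (modest) effort lies and the sole place an arithmetic slip could occur. I would accordingly record these three identities as a single short displayed computation and then invoke $(\ast)$ to conclude that $\alpha_2(n)=d'_3(n)$.
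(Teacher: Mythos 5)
Your proposal is correct and follows essentially the same route as the paper: both use the identities $G(n+1)-1=G(n)+(G(n-2)-1)$ and $G(n)=G(n-1)+G(n-3)$ to exhibit the three $2\times 2$ minors involving $G(n+1)-1$ as $\Z$-linear combinations of the other three, and then invoke property $(\ast)$ to discard them. Your explicit relations $d=-a-c$, $e=-b$, $f=b-c$ all check out, and if anything they make the paper's argument (which merely asserts that $(\ast)$ applies) more transparent.
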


\begin{proof} By definition we have that
\begin{align*}
\alpha_2(n) := & \ {\rm gcd}\{(G(n-2)-1)^2-G(n-1)G(n-3),\\
& \ (G(n-2)-1)G(n-1)-G(n)G(n-3), \\
& \ (G(n-2)-1)(G(n+1)-1)-G(n)G(n-1),\\
& \ (G(n-2)-1)G(n)-G(n-1)^2,\\
& \ G(n-1)(G(n+1)-1)-G(n)^2, \\
& \ G(n-3)(G(n+1)-1)-G(n-1)^2 \}.
\end{align*}
Taking into account that  $G(n+1)-1 = G(n) + (G(n-2) - 1)$ and $G(n) = G(n-3) + G(n-1)$,  and applying $(\ast)$ we have
\begin{align*}
 \alpha_2(n) & =  {\rm gcd}\{(G(n-2)-1)^2-G(n-1)G(n-3),\\
& \hskip1.1cm  (G(n-2)-1)G(n-1)-G(n)G(n-3), \\
& \hskip1.1cm (G(n-2)-1)G(n)-G(n-1)^2,\\
& \hskip1.1cm G(n-1)(G(n+1)-1)-G(n)^2, \\
& \hskip1.1cm  G(n-3)(G(n+1)-1)-G(n-1)^2 \}\\
& = {\rm gcd}\{(G(n-2)-1)^2-G(n-1)G(n-3),\\
& \hskip1.1cm(G(n-2)-1)G(n-1)-G(n)G(n-3), \\
& \hskip1.1cm (G(n-2)-1)G(n)-G(n-1)^2, \\
& \hskip1.1cm G(n-3)(G(n+1)-1)-G(n-1)^2 \}\\
& = {\rm gcd}\{(G(n-2)-1)^2-G(n-1)G(n-3),\\
& \hskip1.1cm (G(n-2)-1)G(n-1)-G(n)G(n-3), \\
& \hskip1.1cm (G(n-2)-1)G(n)-G(n-1)^2 \}  \\
& = {\rm gcd}\{G(n-1)G(n-3)-(G(n-2)-1)^2,\\
& \hskip1.1cm G(n)G(n-3)-G(n-1)(G(n-2)-1), \\
& \hskip1.1cm             G(n-1)^2-G(n)(G(n-2)-1)\}\\
& \ \ \ \ \ \ \ \ \ \ \ \ \  \mbox{(since ${\rm gcd}\{a,b\}={\rm gcd}\{-a,b\}$ for any integers $a,b$) }\\
& = d'_3(n). \hfill \qedhere
\end{align*}
\end{proof}


Finally, we  show that the third determinant divisor $\alpha_3(n)$ appearing in Definition~\ref{DefDeterminantDivisors} exactly coincides with  $H_3(n)$, the $n^{\mathrm{th}}$ term of the third Haselgrove sequence.
As described above,
\[
H_k(n)
:= \left| \det(I_n - A_{C_n^k}^t) \right|
= \left| \prod_{\ell = 0}^{n-1} \left( 1 - \omega_{\ell} - \omega_{\ell}^k \right) \right|
\]
where $\omega_{\ell} = e^{\tfrac{2 \pi i \ell}{n}}$  ($0\leq \ell \leq n-1$)  are the $n$ distinct $n^{\mathrm{th}}$ roots of unity in $\mathbb{C}$.
We are particularly interested in the third Haselgrove sequence $H_3(n)$,
of which the first few terms ($n\geq 1$) are:
\[
H_3:  \ \ 1,3,1,3,11,9,8,27,37,33,67,117,131,192,341,\dots
\]
This sequence has many interesting number-theoretic characteristics (e.g., $H_3(n)$ is a divisibility sequence); we investigate this and additional properties  in \cite{AEG2}.






\begin{proposition}\label{Cor:alpha_3=H_3} Let $n \geq 1$.    Consider the $3 \times 3$ matrix
\[
(M_3^n)^t - I_3 =
\mat{
G(n-2)-1 & G(n-3) & G(n-1) \\
G(n-1) & G(n-2)-1 & G(n) \\
G(n) & G(n-1)  & G(n+1)-1
},
\]
and recall that  $\alpha_3(n) : = |{\rm det}((M_3^n)^t - I_3)|$. Then $\alpha_3(n) = H_3(n) \neq 0$.
\end{proposition}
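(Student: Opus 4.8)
The plan is to establish the identity $\alpha_3(n) = |\det((M_3^n)^t - I_3)| = H_3(n)$ by connecting the determinant of $(M_3^n)^t - I_3$ to the product formula defining $H_3(n)$ through the eigenvalues of the companion matrix $M_3$. The nonvanishing $H_3(n) \neq 0$ is immediate from Proposition \ref{Lem:H_k(n)=0}, since $3 \not\equiv 5 \pmod 6$; so the substance lies in the equality $\alpha_3(n) = H_3(n)$.

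\emph{First} I would recall that, by part (1) of the recollected Proposition from \cite{AA}, $H_k(n) = |\det(I_n - A_{C_n^k}^t)|$, and by Theorem \ref{Thm:SNF} we have ${\rm Coker}(A_{C_n^3} - I_n) \cong {\rm Coker}(M_3^n - I_3)$. Since isomorphic cokernels of integer matrices force equal absolute determinants (both equal the order of the finite cokernel, which is finite here by Proposition \ref{Lem:H_k(n)=0}), it follows directly that
\[
\alpha_3(n) = |\det(M_3^n - I_3)| = |\det(A_{C_n^3} - I_n)| = |\det(I_n - A_{C_n^3}^t)| = H_3(n).
\]
This is in fact the cleanest route: the heavy lifting was already done in Theorem \ref{Thm:SNF}, and the equality of determinants is just the statement that the order of a finite abelian group is an isomorphism invariant, combined with Proposition \ref{Prop:Smithnormalform}. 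The transpose and sign changes are harmless because $\det(M) = \det(M^t)$ and $\det(-M) = (-1)^n \det(M)$, which wash out under absolute value.

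\emph{Alternatively}, for a more self-contained verification independent of Theorem \ref{Thm:SNF}, I would diagonalize $M_3$. The characteristic polynomial of $M_3$ is $p_3(x) = x^3 - x^2 - 1$, whose roots $\lambda_1, \lambda_2, \lambda_3$ are the eigenvalues of $M_3$; hence the eigenvalues of $M_3^n - I_3$ are $\lambda_r^n - 1$, giving $\det(M_3^n - I_3) = \prod_{r=1}^3 (\lambda_r^n - 1)$. The goal is then to match this against $H_3(n) = \bigl|\prod_{\ell=0}^{n-1}(1 - \omega_\ell - \omega_\ell^3)\bigr|$ using the factorization $1 - \omega_\ell - \omega_\ell^3 = -\omega_\ell^3(\omega_\ell^{-3} + \omega_\ell^{-2} - \omega_\ell^{-3})$, or more cleanly by viewing $\prod_\ell(1-\omega_\ell-\omega_\ell^3)$ as a resultant and relating it to $\prod_r(\lambda_r^n - 1)$ through the reciprocal structure of the defining polynomial $p_3$.

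\emph{The main obstacle} I anticipate is purely bookkeeping: confirming that the finiteness hypothesis needed to equate $|\det|$ with the cokernel order holds, which is exactly supplied by Proposition \ref{Lem:H_k(n)=0} (giving $H_3(n) \neq 0$ and hence a finite, no-zero-entry Smith Normal Form), and then being careful that the chain of sign changes and transposes among $A_{C_n^3} - I_n$, $I_n - A_{C_n^3}^t$, $M_3^n - I_3$, and $(M_3^n)^t - I_3$ all preserve the absolute value of the determinant. Given that Theorem \ref{Thm:SNF} and the earlier Propositions are available, the first route reduces the entire proof to these observations, so the expected difficulty is low; the authors likely intend precisely this short argument, with the eigenvalue computation serving only as optional confirmation.
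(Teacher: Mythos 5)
Your first route is exactly the paper's proof: it invokes Proposition \ref{Lem:H_k(n)=0} for $H_3(n)\neq 0$, the recalled fact that $H_3(n)=|\det(I_n-A_{C_n^3}^t)|$, and Theorem \ref{Thm:SNF} plus Proposition \ref{Prop:Smithnormalform} to equate the absolute determinants of matrices with isomorphic finite cokernels, with the transposes and signs absorbed by the absolute value. The eigenvalue/resultant alternative you sketch is not needed and is not pursued by the authors.
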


\begin{proof}
By Proposition ~\ref{Lem:H_k(n)=0} we have $H_3(n) \neq 0$ for all $n\geq 1$. Thus we may invoke the previously cited ~\cite[Proposition 2.3]{AA} and~\cite[Proposition 2.5]{AA} to get
$
 H_3(n) = |\det(I_n - A_{C_n^3}^t)|.
$
By Theorem~\ref{Thm:SNF}, $A_{C_n^3} - I_n$ and $M_3^n - I_3$ have isomorphic cokernels. Therefore, using Proposition \ref{Prop:Smithnormalform} we get
\[
|\det(I_n - A_{C_n^3}^t)| = |\det(A_{C_n^3} - I_n)| = |\det(M_3^n - I_3)|= |\det((M_3^n)^t - I_3)| :=
\alpha_{3}(n).
\]
We conclude that $\alpha_{3}(n) = H_{3}(n)$ for all $n \geq 1$.
\end{proof}

We are now in a position to give the main result of this section. Applying Corollary \ref{Cor:K_0intermsofalpha}, Lemma \ref{Lem:alpha_1=d_3},  Lemma \ref{Lem:alpha_2} and Proposition  \ref{Cor:alpha_3=H_3} finally we get:

\begin{theorem}\label{Cor:K_0oflpasC_n^3} Let $n \in \mathbb{N}$. Then
$$K_0(L_K(C_n^3)) \cong \mathbb{Z}_{d_3(n)} \oplus \mathbb{Z}_{\frac{d'_3(n)}{d_3(n)}} \oplus \mathbb{Z}_{\frac{H_3(n)}{d'_3(n)}}.$$
\end{theorem}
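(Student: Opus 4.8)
The plan is to obtain this theorem as an immediate consequence of Corollary \ref{Cor:K_0intermsofalpha} together with the three preceding identifications of the determinant divisors. Corollary \ref{Cor:K_0intermsofalpha} already supplies the full invariant-factor decomposition
$$K_0(L_K(C_n^3)) \cong \mathbb{Z}_{\alpha_1(n)} \oplus \mathbb{Z}_{\frac{\alpha_2(n)}{\alpha_1(n)}} \oplus \mathbb{Z}_{\frac{\alpha_3(n)}{\alpha_2(n)}},$$
so the only work remaining is to rewrite each determinant divisor $\alpha_i(n)$ in the desired number-theoretic form. In other words, the genuine content of the theorem has been front-loaded into the lemmas it cites, and this final step is a clean substitution.

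First I would invoke Lemma \ref{Lem:alpha_1=d_3} to replace $\alpha_1(n)$ by $d_3(n)$, then Lemma \ref{Lem:alpha_2} to replace $\alpha_2(n)$ by $d'_3(n)$, and finally Proposition \ref{Cor:alpha_3=H_3} to replace $\alpha_3(n)$ by $H_3(n)$. Substituting these three equalities termwise into the displayed decomposition yields exactly
$$K_0(L_K(C_n^3)) \cong \mathbb{Z}_{d_3(n)} \oplus \mathbb{Z}_{\frac{d'_3(n)}{d_3(n)}} \oplus \mathbb{Z}_{\frac{H_3(n)}{d'_3(n)}}.$$
Before concluding I would check that the two fractions appearing as subscripts are genuinely integers, so that each cyclic summand is well defined. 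This is guaranteed by the Determinant Divisors Theorem: the invariant factors $s_i = \alpha_i(n)/\alpha_{i-1}(n)$ are integers, hence $\alpha_1(n) \mid \alpha_2(n) \mid \alpha_3(n)$, and after the identifications above this reads $d_3(n) \mid d'_3(n) \mid H_3(n)$.

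The substance of the argument lives upstream, so the theorem itself offers essentially no resistance. If there is any obstacle to anticipate, it is not in this final assembly but in confirming that each cited result applies uniformly for every $n \in \mathbb{N}$, with no exceptional small-$n$ case. In particular, Proposition \ref{Cor:alpha_3=H_3} relies on $H_3(n) \neq 0$, which holds for all $n$ because $3 \not\equiv 5 \pmod 6$ by Proposition \ref{Lem:H_k(n)=0}; this same nonvanishing is what guarantees that none of the Smith Normal Form entries of $(M_3^n)^t - I_3$ is zero, which Corollary \ref{Cor:K_0intermsofalpha} tacitly requires. Verifying this finiteness and applicability across all $n$ is the only point meriting care, and it has already been secured earlier in the paper.
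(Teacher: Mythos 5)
Your proposal matches the paper's argument exactly: the theorem is obtained by substituting the identifications $\alpha_1(n)=d_3(n)$, $\alpha_2(n)=d_3'(n)$, and $\alpha_3(n)=H_3(n)$ from Lemma \ref{Lem:alpha_1=d_3}, Lemma \ref{Lem:alpha_2}, and Proposition \ref{Cor:alpha_3=H_3} into the decomposition of Corollary \ref{Cor:K_0intermsofalpha}. Your added remarks on integrality of the quotients and the nonvanishing of $H_3(n)$ are correct and consistent with what the paper establishes upstream.
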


\begin{example} Using Theorem \ref{Cor:K_0oflpasC_n^3}, here are  explicit descriptions of the Grothendieck groups $K_0(L_K(C_n^3))$ which arise for small values of $n$.
\begin{align*}
& n=3: \ \ K_0(L_K(C_3^3)) \cong \{0\}\\
& n=4: \ \ K_0(L_K(C_4^3)) \cong \mathbb{Z}_3\\
& n=5: \ \ K_0(L_K(C_5^3)) \cong \mathbb{Z}_{11}\\
& n=6: \ \ K_0(L_K(C_6^3)) \cong \mathbb{Z}_9 \\
& n=7: \ \ K_0(L_K(C_7^3)) \cong \mathbb{Z}_2 \oplus \mathbb{Z}_2 \oplus \mathbb{Z}_2.
\end{align*}
It is also possible for the $K_0$ group to consist of exactly two nontrivial direct summands; for instance,    $n=30$: \ $K_0(L_K(C_{30}^3)) \cong \mathbb{Z}_{31} \oplus \mathbb{Z}_{3069}$.
\end{example}
\bigskip

To finish this section we demonstrate that the results of~\cite{AA} follow from Theorem~\ref{Thm:SNF}.
When $j=2$, the companion matrix is
\[
M_2 =
\mat{
0 & 1 \\
1 & 1 \\
}.
\]
Let $F(n)$ be the $n^\mathrm{th}$ Fibonacci number. Then a well-known Fibonacci identity states that
\[
M_2^n - I_2 =
\mat{
F(n-1) - 1 &   F(n)     \\
    F(n)   & F(n+1) - 1 \\
}.
\]
So the determinant divisors will be
\[
\alpha_1 \eg \gcd(F(n-1)-1,F(n)) := d_2(n)
\]
and
\[
\alpha_2 = \det(M_2^n - I_2) = (F(n+1)-1) (F(n-1)-1) - F(n)^2.
\]
Another Fibonacci identity states that
\[
F(n+1) F(n-1) - F(n)^2 = (-1)^n.
\]
Putting these two together,
\begin{align*}
\alpha_2 &= F(n+1) F(n-1) - F(n+1) - F(n-1) + 1 - F(n)^2 \\
         &= (-1)^n - F(n+1) - F(n-1) + 1 \\
         &= -(F(n+1) + F(n-1) - 1 - (-1)^n),
\end{align*}
which is negative of the formula for $H_2(n)$ in Equation (HtoF) of~\cite[Proposition 4.4]{AA}.
Applying the Smith Normal Form, we obtain the main result of~\cite[Theorem 4.13]{AA}:
\[
K_0(L_K(C_n^2)) \cong \mathbb{Z}_{d_2(n)} \oplus \mathbb{Z}_{\frac{H_2(n)}{d_2(n)}} .
\]

\section{Leavitt path algebras of the graphs $C_n^3$}\label{Sect:K_0lpasC_n^3}

As mentioned in the introductory remarks, the primary motivation for identifying the structure of the group $M_{C_n^j}^*$ is in its realization as the Grothendieck group of the Leavitt path algebra $L_K(C_n^j)$, and subsequent utilization in identifying $L_K(C_n^j)$ up to isomorphism.   In the final section of the article we bring this program to fruition.   We begin by recalling briefly some of the basic ideas; for additional information, see e.g. \cite{AAS}.

\smallskip

{\bf Definition of Leavitt path algebra.}   Let $K$ be a field, and let $E = (E^0, E^1, r,s)$ be a directed  graph with vertex set $E^0$ and edge set $E^1$.   The {\em Leavitt path $K$-algebra} $L_K(E)$ {\em of $E$ with coefficients in $K$} is  the $K$-algebra generated by a set $\{v\mid v\in E^0\}$, together with a set of variables $\{e,e^*\mid e\in E^1\}$, which satisfy the following relations:

(V)   \ \ \  \ $vw = \delta_{v,w}v$ for all $v,w\in E^0$, \

(E1) \ \ \ $s(e)e=er(e)=e$ for all $e\in E^1$,

(E2) \ \ \ $r(e)e^*=e^*s(e)=e^*$ for all $e\in E^1$,

(CK1) \ $e^*e'=\delta _{e,e'}r(e)$ for all $e,e'\in E^1$,

(CK2)Ê\ \ $v=\sum _{\{ e\in E^1\mid s(e)=v \}}ee^*$ for every   $v\in E^0$ for which $0 < |s^{-1}(v)| < \infty$.

An alternate description of $L_K(E)$ may be given as follows.  For any graph $E$ let $\widehat{E}$ denote the ``double graph" of $E$, gotten by adding to $E$ an edge $e^*$ in a reversed direction for each edge $e\in E^1$.   Then $L_K(E)$ is the usual path algebra $K\widehat{E}$, modulo the ideal generated by the relations (CK1) and (CK2).     \hfill $\Box$

\smallskip

It is easy to show that $L_K(E)$ is unital if and only if $|E^0|$ is finite.  This is of course the case when $E = C_n^j$.    We now have the necessary background information in hand which allows us to present the powerful tool which will yield a number of key results.

\smallskip
 {\bf The Restricted Algebraic KP Theorem.} \cite[Corollary 2.7]{ALPS} Suppose $E$ and $F$ are finite graphs for which the Leavitt path algebras $L_K(E)$ and $L_K(F)$ are purely infinite simple.   Suppose that there is an isomorphism $\varphi : K_0(L_K(E)) \rightarrow K_0(L_K(F))$ for which $\varphi([L_K(E)]) = [L_K(F)]$, and suppose also that the two integers  ${\rm det}(I_{|E^0|}  -  A_E^t)$ and ${\rm det}(I_{|F^0|}-A_F^t)$ have the same sign (i.e., are either both nonnegative, or  both nonpositive).    Then $L_K(E) \cong L_K(F)$ as $K$-algebras.

\smallskip
The proof of the Restricted Algebraic KP Theorem utilizes deep results and ideas in the theory of symbolic dynamics. The letters K and P in its name derive from E. Kirchberg and N.C. Phillips, who (independently in 2000) proved an analogous result for graph $C^*$-algebras.  (We note that this analogous result does not include the hypothesis on the signs of the germane determinants; it is not known whether this hypothesis is required for the algebraic result, hence the addition of the word 'Restricted' to the name.)

We are also in position to apply Algebraic KP Theorem to explicitly realize the algebras $L_K(C_n^3)$ as the Leavitt path algebras of graphs having four vertices.   The following will be important here:  by \cite[Proposition 1.5]{AA},  for any pair $n,j$  we have that the identity element of the group $K_0(C_n^j)$ is the element $[L_K(C_n^j)] = \sum_{v\in E^0} [v]$.

\begin{proposition}\label{Propgraphfourvertices}   Let $n\in \N$. The Leavitt path algebra $L_K(C_n^3)$ is isomorphic to the Leavitt path algebra $L_K(E_n)$, where  $E_n$ is the graph with four vertices given by
$$\xymatrix{ &   {\bullet}^{u_1}  \ar@(lu,ru)^{(2)} \ar@/^{5pt}/[dl]\ar@/^{5pt}/[dr] \ar@/^{5pt}/[dd] &  \\
           {\bullet}^{u_2} \ar@/^{5pt}/[ur] \ar@/^{5pt}/[dr] \ar@/^{5pt}/[rr] \ar@(l,d)_{(2+d_3(n))}&   & {\bullet}^{u_3} \ar@/^{5pt}/[ll] \ar@/^{5pt}/[dl] \ar@/^{5pt}/[ul] \ar@(u,r)^{\left(2+\tfrac{d'_3(n)}{d_3(n)}\right)} \\
           &   {\bullet}^{u_4} \ar@/^{5pt}/[ur] \ar@/^{5pt}/[ul] \ar@/^{5pt}/[uu] \ar@(rd,ld)^{\left(2+\tfrac{H_3(n)}{d'_3(n)}\right)} &
}$$ (where the numbers in parentheses indicate the number of loops at the indicated  vertex).
\end{proposition}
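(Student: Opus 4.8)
The plan is to invoke the Restricted Algebraic KP Theorem \cite{ALPS} with $E = C_n^3$ and $F = E_n$. This asks for four things: that both $L_K(C_n^3)$ and $L_K(E_n)$ are purely infinite simple; that there is a group isomorphism $\varphi \colon K_0(L_K(C_n^3)) \to K_0(L_K(E_n))$; that $\varphi$ carries the distinguished class $[L_K(C_n^3)]$ to $[L_K(E_n)]$; and that $\det(I_n - A_{C_n^3}^t)$ and $\det(I_4 - A_{E_n}^t)$ have the same sign. Pure infinite simplicity of $L_K(E_n)$ follows from the graph-theoretic criterion of \cite{AAP2}: $E_n$ is finite with no sinks, it is cofinal (every vertex reaches every other), and every cycle has an exit, since each $u_i$ carries loops in addition to outgoing edges to the other three vertices.

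First I would read off the incidence matrix. Writing $a = d_3(n)$, $b = d'_3(n)/d_3(n)$ and $c = H_3(n)/d'_3(n)$, the graph $E_n$ is the complete digraph on $\{u_1,u_2,u_3,u_4\}$ (one edge in each direction between each pair) together with $2,\,2{+}a,\,2{+}b,\,2{+}c$ loops at $u_1,u_2,u_3,u_4$ respectively; thus $A_{E_n}$ is the symmetric matrix carrying these loop counts on the diagonal and $1$ off the diagonal. Subtracting the first row of $A_{E_n} - I_4$ from each of the other three rows, and then clearing the first row by the column operations $C_i - C_1$, reduces $A_{E_n} - I_4$ to $\mathrm{diag}(1, a, b, c)$. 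By Remark \ref{isomorphicCokerremark} this diagonal matrix has the same cokernel, so $K_0(L_K(E_n)) \cong \Z_a \oplus \Z_b \oplus \Z_c$, which is precisely $K_0(L_K(C_n^3))$ by Theorem \ref{Cor:K_0oflpasC_n^3}. This supplies the required isomorphism $\varphi$.

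The condition $\varphi([L_K(C_n^3)]) = [L_K(E_n)]$ I would dispose of by showing that the identity class is trivial in \emph{both} groups, so that the requirement costs nothing for any choice of $\varphi$. For $C_n^3$, summing the defining relations $[v_i] = [v_{i+1}] + [v_{i+j}]$ over all $i$ gives $\sum_i [v_i] = 2\sum_i [v_i]$, whence $[L_K(C_n^3)] = \sum_i [v_i] = 0$. For $E_n$, the single relation arising from $u_1$ reads $[u_1] = 2[u_1] + [u_2] + [u_3] + [u_4]$, that is, $[u_1] + [u_2] + [u_3] + [u_4] = 0$, so again $[L_K(E_n)] = 0$. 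It is exactly the choice of \emph{two} loops at $u_1$, together with a single edge from $u_1$ to each of the other vertices, that forces this, and it is what makes the identity-preservation hypothesis automatic.

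The step I expect to be the main obstacle is the sign hypothesis. Here I would compute $\lvert\det(I_4 - A_{E_n}^t)\rvert = a\,b\,c = d_3(n)\cdot \tfrac{d'_3(n)}{d_3(n)} \cdot \tfrac{H_3(n)}{d'_3(n)} = H_3(n)$, which agrees with $\lvert\det(I_n - A_{C_n^3}^t)\rvert = H_3(n)$; the delicate part is reconciling the \emph{signs}, namely arranging that $\det(I_4 - A_{E_n}^t)$ be nonpositive to match $\det(I_n - A_{C_n^3}^t) = -H_3(n)$. This sign bookkeeping depends on the parity of the vertex count and on the orientation of $E_n$, and is the point where the precise structure of the realizing graph must be pinned down most carefully. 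Once the signs are confirmed to agree, all four hypotheses of the Restricted KP Theorem are in place, and we conclude $L_K(C_n^3) \cong L_K(E_n)$ as $K$-algebras.
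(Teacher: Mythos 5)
Your proposal follows the paper's proof almost step for step: both invoke the Restricted Algebraic KP Theorem, both obtain $K_0(L_K(E_n)) \cong \Z_{d_3(n)} \oplus \Z_{d_3'(n)/d_3(n)} \oplus \Z_{H_3(n)/d_3'(n)}$ by diagonalizing the relevant $4\times 4$ matrix, and both verify the identity-preservation hypothesis by showing that $\sum_i [u_i]$ (resp.\ $\sum_i[v_i]$) is the zero element of the corresponding $K_0$ group, using the relation at $u_1$ exactly as you do. All of that is correct and matches the paper.

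The gap is the one step you defer with ``once the signs are confirmed to agree.'' This is not bookkeeping that can be postponed: it is the only hypothesis of the Restricted KP Theorem that depends on the matrix itself rather than on its Smith Normal Form, and when the computation is carried out it does not come out the way you need. Writing $N$ for the matrix with rows $(1,1,1,1)$, $(1,1+a,1,1)$, $(1,1,1+b,1)$, $(1,1,1,1+c)$, where $a=d_3(n)$, $b=d_3'(n)/d_3(n)$, $c=H_3(n)/d_3'(n)$, one has $I_4 - A_{E_n}^t = -N$ and $\det N = abc = H_3(n) > 0$; since the matrix is $4\times 4$, $\det(I_4 - A_{E_n}^t) = (-1)^4\det N = +H_3(n) > 0$, whereas $\det(I_n - A_{C_n^3}^t) = -H_3(n) < 0$. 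The two determinants therefore have strictly opposite signs, and the theorem as stated does not apply to the pair $(C_n^3, E_n)$. (The paper's own proof asserts $\det(I_4 - A_{E_n}^t) = -H_3(n) < 0$, but that is incompatible with the identity $\det(-N) = (-1)^4\det(N)$; the analogous realization for $j=2$ in \cite{AA} succeeds precisely because there the realizing graph has an \emph{odd} number of vertices, so the global sign does flip.) Your instinct that the sign condition is ``the main obstacle'' was exactly right, but leaving it unverified leaves the proof incomplete, and on the face of it the verification fails for the graph $E_n$ as drawn; closing the argument requires either modifying $E_n$ so that $\det(I_{|E_n^0|} - A_{E_n}^t)$ is nonpositive, or some additional justification not present in either your write-up or the paper.
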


\begin{proof}
Using the characterization given in \cite{AAP2}, it follows easily that the graph $E_n$ satisfies the conditions for $L_K(E_n)$ to be unital purely infinite simple.    The incidence matrix of $E_n$ is $$A_{E_n}=\left(\begin{matrix} 2 & 1 & 1 & 1\\ 1 & 2+d_3(n) & 1 & 1\\ 1 & 1 & 2+\tfrac{d'_3(n)}{d_3(n)} & 1\\ 1 & 1 & 1 & 2+\tfrac{H_3(n)}{d'_3(n)} \end{matrix}\right),$$ so that $$I_4-A^t_{E_n}=-\left(\begin{matrix} 1 & 1 & 1 & 1\\ 1 & 1+d_3(n) & 1 & 1\\ 1 & 1 & 1+\tfrac{d'_3(n)}{d_3(n)} & 1\\ 1 & 1 & 1 & 1+\tfrac{H_3(n)}{d'_3(n)} \end{matrix}\right).$$
\noindent
A straightforward computation yields  that the Smith Normal Form of $I_4-A^t_{E_n}$ is  $$\left(\begin{matrix} 1 & 0 & 0 & 0\\ 0 & d_3(n) & 0 & 0\\ 0 & 0 & \tfrac{d'_3(n)}{d_3(n)} & 0\\ 0 & 0 & 0 & \tfrac{H_3(n)}{d_3(n)}\end{matrix}\right),$$ which immediately yields that  $K_0(L_K(E_n))$ is isomorphic to $\Z_{d_3(n)}\oplus  \Z_{\frac{d'_3(n)}{d_3(n)}} \oplus \Z_{\frac{H_3(n)}{d'_3(n)}}$.

Also, it is straightforward to check that $\det(I_4-A^t_{E_n})=-H_3(n)<0$.  (We remind the reader that the sign of the determinant of a matrix cannot be gleaned from the Smith Normal Form of the matrix; specifically, one must compute the determinant of $I_4-A^t_{E_n}$ directly from the matrix itself.)

Finally,  by invoking the relation in $K_0(L_K(E_n))$ at  $u_1$, we have
\begin{align*}
 [u_1] + [u_2] + [u_3] + [u_4]  & = (2[u_1] + [u_2] + [u_3] + [u_4]) + [u_2] + [u_3] + [u_4] \\
& = 2([u_1] + [u_2] + [u_3]+[u_4]),
\end{align*}
so that $\sigma = [u_1] + [u_2] + [u_3] +[u_4]$ satisfies $\sigma =2\sigma$ in the group $K_0(L_K(E_n))$, which gives  that $\sigma = [u_1] + [u_2] + [u_3]+[u_4]$ is the identity element of $K_0(L_K(E_n))$.

So   the purely infinite simple unital Leavitt path algebras  $L_K(C_n^3)$ and $L_K(E_n)$ have these properties:

\begin{enumerate}
\item  $K_0(L_K(C_n^3)) \cong K_0(L_K(E_n))$ (as each is isomorphic to $\Z_{d_3(n)}\oplus  \Z_{\frac{d'_3(n)}{d_3(n)}} \oplus \Z_{\frac{H_3(n)}{d'_3(n)}}$),

\item  this  isomorphism necessarily takes $[L_K(C_n^3)]$ to $[L_K(E_n)]$ (as each of these is the identity element in their respective $K_0$ groups), and

\item   both  $\det(I_n-A^t_{C_n^3})$   and   $\det(I_4-A^t_{E_n})$ are negative.

\end{enumerate}

Thus  the graphs $C_n^3$ and $E_n$ satisfy the hypotheses of the Restricted Algebraic KP Theorem, and so the desired isomorphism $L_K(C_n^3) \cong L_K(E_n)$ follows.
\end{proof}

\begin{remark}
{\rm   Although it is relatively easy  to produce graphs $F_n$ having three vertices for which $K_0(L_K(F_n))\cong K_0(L_K(C_n^3))$, we do not know how to produce such graphs for which $[F_n]$ is the zero element in $K_0(L_K(F_n))$, which therefore precludes  us from applying the Restricted Algebraic KP Theorem to the Leavitt path algebras of these graphs. }
\end{remark}

\begin{remark}
{\rm   Using the template afforded by the $4$-vertex graph  presented in Proposition \ref{Propgraphfourvertices},  for each pair $j,n$ with $0\leq j \leq n-1$  one can easily construct a graph $E_n(j)$ having $j+1$ vertices, for which the Leavitt path algebras $L_K(E_n(j))$ and $L_K(C_n^j)$ are isomorphic.
}
\end{remark}

A number of intriguing number-theoretic properties of the Haselgrove sequences and group-theoretic properties of the groups $K_0(L_K(C_n^j))$ arose in the context of the investigation presented in this article.  For instance, as mentioned previously, the $H_k(n)$ sequences can be shown to be divisibility sequences.    For another example, in the $j=3$ case we may give a more explicit description of the integers $d_3^\prime(n)$, as a product  of a power of $d_3(n)$ with an ``indicator factor".    (In retrospect,  we see that an analogous statement arose in the proof of the corresponding result in  the $j=2$ case carried out in \cite{AA}, but this indicator factor  turned out  not to  play a role  in the invariant factors representation of the abelian group $K_0(L_K(C_n^2))$.)    However, such an ``indicator factor"  description does not extend to the cases $j\geq 4$; for this reason we refer to the $j=3$ case as a ``sweet spot" in this setting.
These and many additional properties will be presented in \cite{AEG2}.

\section*{acknowledgements}
The authors would like to thank G. Aranda Pino and M. Iovanov for fruitful discussions during the preparation of this paper. Some of these results were anticipated and suggested by looking at output from the software package \emph{Magma}. The authors are grateful to A. Viruel for his valuable help with this software.

The first author was partially supported by a Simons Foundation Collaboration Grant \#208941.   The third author was partially supported by the Spanish MEC and Fondos FEDER through projects MTM2013-41208-P and MTM2016-76327-C3-1-P; by the
Junta de Andaluc\'{\i}a and Fondos FEDER, jointly, through project FQM-7156; and by the grant "Ayudas para la realizaci\'on de estancias en centros de investigaci\'on de calidad" of the "Plan Propio de Investigaci\'on y Transferencia" of the University of M\'alaga, Spain. Part of this work was carried out
during a visit of the third author to the University of Colorado, Colorado Springs, USA. The third author thanks this host institution for its warm hospitality and support.

\end{document}